\documentclass[11pt, reqno]{amsart}
\usepackage{amssymb, amscd, mathrsfs, wasysym} 
\usepackage{xcolor}
\usepackage{tikz}
\providecommand{\coloneqq}{\mathrel{:=}}
\newcommand{\Ocal}{\mathcal O}
\numberwithin{equation}{section}
\numberwithin{figure}{section}
\theoremstyle{plain}
\newtheorem{theorem}{Theorem}[section]
\newtheorem{corollary}[theorem]{Corollary}
\newtheorem{proposition}[theorem]{Proposition}
\newtheorem{lemma}[theorem]{Lemma}
\theoremstyle{definition}
\newtheorem{definition}[theorem]{Definition}
\newtheorem{example}[theorem]{Example}
\theoremstyle{remark}

\usepackage[a4paper,left=22mm,right=22mm,top=22mm,bottom=24mm,
  headheight=12pt,headsep=14pt]{geometry}
\newcommand \bei {\begin{itemize}}
\newcommand \eei {\end{itemize}}
\newcommand\bel[1]{\begin{equation}\label{#1}}
\newcommand\be {\begin{equation}}
\newcommand\ee{\end{equation}}
\newcommand \Sec {\text{Sec}} 
\newcommand \Dscr {\mathscr D}
\newcommand \Lscr {\mathscr L}
\newcommand \Rscr {\mathscr R}
\newcommand \Ascr {\mathscr A}
\newcommand \Cscr {\mathscr C}
\newcommand \Wscr {\mathscr W}
\newcommand \Wscrt {\mathscr W} 

\newcommand \Fscr {\mathscr F}

\newcommand \Hcal {\mathcal H}
\newcommand \Lcal {\mathcal L}
\newcommand \Acal {\mathcal A}
\newcommand \SigmaMCut {\Sigma^{\setminus\text{Cut}}}
\newcommand \SigmaCCut {\Sigma^{\cap\text{Cut}}}

\newcommand \la 		\langle
\newcommand \ra 		\rangle

\newcommand \del 	\partial
\newcommand\eps{\ensuremath{\varepsilon}}
\newcommand\RR {\ensuremath{\mathbb{R}}}

\newcommand\II{\ensuremath{\mathrm{II}}}
\newcommand\sn{\ensuremath{\mathrm{sn}}}
\newcommand\md{\ensuremath{\mathrm{md}}}

\DeclareMathOperator{\Cut}{Cut}
\DeclareMathOperator{\Hess}{Hess}

\DeclareMathOperator{\diam}{diam}
\DeclareMathOperator{\grad}{grad}
\DeclareMathOperator{\dv}{div}
\DeclareMathOperator{\dive}{div}
\newcommand \dist {\mbox{dist}}
\DeclareMathOperator{\tr}{tr}
\DeclareMathOperator{\inj}{inj}
\let\oldmarginpar\marginpar
\renewcommand\marginpar[1]{\-\oldmarginpar[\raggedleft\footnotesize #1]%
{\raggedright\footnotesize #1}}
\newcommand \Wscrtt {\widetilde \Wscr}

\begin{document}

\title[] 
{Willmore inequalities beyond the injectivity radius}

\author[Philippe G. L{\smaller E}Floch]{Philippe G. L{\smaller E}Floch}
\thanks{Laboratoire Jacques-Louis Lions \& Centre national de la recherche scientifique, Sorbonne University, 4 Place Jussieu, 75252 Paris, France.  Email: \texttt{contact@philippelefloch.org}.}
\subjclass[2020]{53C20, 53C42, 53C80}

\date{} 

\begin{abstract} We generalize a method first introduced by Li, Yau, Simon, and Topping (for closed surfaces immersed in the flat Euclidean space), and we establish a global-in-nature estimate, with an explicit cut-locus contribution, on the Willmore energy of closed surfaces that are immersed in a complete Riemannian three-manifold and need not be embedded. Our estimate applies to ambient spaces whose sectional curvature is bounded above and whose injectivity radius may be {\sl finite.} For positive upper curvature bounds, the global weighted estimates require an explicit radius restriction. The Willmore energy is bounded below by a functional involving the self-intersection number of the surface and the injectivity radius of the ambient space. Furthermore, by investigating in what sense our inequalities are optimal and realized by  geodesic spheres in constant curvature spaces, we are led to introduce a weighted version of the Willmore functional, which is adapted to geodesic spheres in the model spaces and takes the sectional curvature of the ambient space into account. As a corollary, we obtain  sufficient conditions for an immersed surface in a curved space to be embedded.  
\end{abstract}

\maketitle
  

\section{Introduction and main results}
\label{sec:intro}

\subsection*{Objective and background}

We consider closed surfaces that are immersed in a complete Riemannian three-manifold, but need not be embedded. Our objective is to establish \emph{global-in-nature lower bounds} on their Willmore energy. 
In particular, we arrive at sufficient conditions for a surface in curved space to be embedded. The Euclidean lower bound can fail when the ambient injectivity radius is finite: a flat three-dimensional quotient contains an embedded totally geodesic two-torus with zero Willmore energy. The cut-locus contribution quantifies this obstruction. The resulting estimates give conditions for the absence of self-intersections and upper bounds on the ambient injectivity radius when the energy is small relative to the multiplicity. 

The method presented here allows us to consider an ambient space with {\sl finite} injectivity radius, and to encompass surfaces ``going beyond'' the injectivity radius of the ambient manifold. We generalize a method proposed by Li and Yau \cite{LiYau:Willmore}, Simon~\cite{Simon}, and Topping~\cite{Topping:MCF}, who were interested in surfaces immersed in {\sl flat} space with, therefore, {\sl infinite} injectivity radius. 

Working in a curved space with possibly finite injectivity radius leads us to study the cut-locus contributions of the Willmore energy. In particular, our analysis leads us to the functionals $\Fscr_n, \Ascr, \Lscr$, defined later in this section, which are naturally associated with a surface immersed in a curved space with finite injectivity radius. We focus on the explicit cut-locus contribution; in the absence of a cut contribution, the nonpositive-curvature bound recovers known Willmore inequalities~\cite{Ritore,Scharrer}.

The {\sl Willmore energy of a surface} $\Sigma \hookrightarrow \RR^3$ immersed in the Euclidean space is defined 
by 
\bel{eq:180}
\Wscr(\Sigma) \coloneqq \int_\Sigma H^2 \, d\Acal_\Sigma. 
\ee
Here, given an immersion $\varphi \colon \Sigma \to \RR^3$ of a closed, oriented $2$-surface, we denote by $H$ its (scalar) mean curvature, by $g_\Sigma$ the induced metric, and by $d\Acal_\Sigma$ the volume form on the surface. Of course, the energy depends on the immersion, but we often identify the surface with its image $\Sigma = \varphi(\Sigma)\subset \RR^3$. 

We also introduce the functional $\Wscr_0(\Sigma) \coloneqq \int_\Sigma \left( H^2 - \kappa \right) d\Acal_\Sigma$, where $\kappa$ denotes the Gauss curvature of the surface. Then, in terms of the principal curvatures $k_1, k_2$ of the surface, we have $2 \, H = k_1 + k_2$ and $\kappa = k_1 \, k_2$, so that
\be
\Wscr_0(\Sigma) = {1 \over 4} \int_\Sigma \left( k_1 - k_2 \right)^2 d\Acal_\Sigma \, \geq 0,
\qquad \Sigma \subset \RR^3.
\ee
Thanks to the Gauss-Bonnet theorem, these two functionals are related as 
\be
\Wscr(\Sigma) = \Wscr_0(\Sigma) + 2 \pi \, \chi(\Sigma),  \qquad \Sigma \subset \RR^3,
\ee
where $\chi(\Sigma)= 2(1-h)$ (with $h=0, 1, \ldots$) is the Euler characteristic  of the surface. The inequality
$\Wscr(\Sigma) \ge 2 \pi \, \chi(\Sigma)$ then provides one with an obvious, yet non-trivial, lower bound for the Willmore energy, provided the surface is diffeomorphic to a topological sphere, namely
\bel{eq:181}
\Wscr(\Sigma) \ge 4 \pi, \qquad \Sigma \simeq S^2 \subset \RR^3.
\ee
In fact, it is known (see, for instance \cite[p.~362]{Topping:W}) that \eqref{eq:181} holds for surfaces of {\sl any} genus.

A much stronger estimate is established in Li and Yau~\cite[Theorem~6]{LiYau:Willmore} (see, also, Topping~\cite{Topping:MCF}), which
holds for {\sl any} closed surface $\Sigma$ (independently of its topology): 
\bel{eq:160}
\Wscr(\Sigma) \ge 4 \pi k(\Sigma),   \qquad \Sigma \subset \RR^3.
\ee
Here, $k(\Sigma)$ denotes the largest number of self-intersections of the surface, i.e., by definition, there exists $p \in \RR^3$ and $k$ distinct points $p_1, \ldots, p_k \in \Sigma$ such that $\varphi(p_1) = \ldots = \varphi(p_k) = p$.
In particular, if the Willmore energy is less than $8\pi$ then the surface admits no self-intersection and the map $\varphi \colon \Sigma \to \RR^3$ is actually an {\sl embedding.} For small Willmore spheres in Riemannian three-manifolds, see~\cite{LM}.  

Our primary objective in the present paper is to generalize the Li--Yau estimate~\eqref{eq:160} in order to encompass surfaces immersed in a {\sl curved ambient manifold} with (possibly) {\sl finite injectivity radius.}  We will also treat properly immersed non-compact surfaces whose ends are assumed to be asymptotically cylindrical.


\subsection*{Notation}

We introduce the notation used throughout this article. We assume given a complete Riemannian $3$-manifold (without boundary) $(M, g)$ whose inner product is denoted by $g(\cdot, \cdot)$ or $\la \cdot, \cdot \ra$, whose Levi-Civita connection is denoted by $\nabla$. We consider oriented $2$-manifolds $\Sigma$ and immersions $\varphi \colon \Sigma \to M$ in the ambient space $M$ and,
as mentioned earlier, we identify $\Sigma$ with its image $\varphi(\Sigma) \subset M$. We denote by $g_\Sigma \coloneqq \varphi^* g$ the metric induced on the surface from the ambient metric $g$, and $d\Acal_\Sigma$ denotes the corresponding volume form on $\Sigma$.
Next, we denote by $n \in TM$ a locally chosen unit normal vector to $\Sigma\subset M$, and by $\II$ the second fundamental form of the surface, with the convention that for all tangent vector fields $X, Y$  
\be
\aligned
\II(X, Y) \coloneqq - g(\nabla_X Y,n) = g(\nabla_X n,Y).
\endaligned
\ee
The mean curvature $H$ is then defined from the trace of the second fundamental form by 
$
H \coloneqq \tfrac{1}{2} \tr (\II). 
$
The densities used below are invariant under simultaneous reversal of $n$ and $H$. This notation allows us to introduce the {\sl Willmore energy} of the immersion $\varphi \colon  \Sigma \to M$ (now in curved space) by the {\sl same} formula\footnote{Observe that  this definition makes sense if $\Sigma$ is closed or, more generally, has finitely many cylindrical ends (cf.~the discussion at the end of Section~\ref{sec:2}, below).}
 \eqref{eq:180}. 

For $p\in\varphi(\Sigma)$, the multiplicity is $k_p(\Sigma)\coloneqq\#\varphi^{-1}(p)$. By taking the maximum over the image, we set
\be
k(\Sigma) \coloneqq \max_{p\in\varphi(\Sigma)} k_p(\Sigma).
\ee
Furthermore, we consider the ambient space $M$ and denote by $B_R(p) \subset M$ the open geodesic ball centered at some point $p$ and with radius $R>0$. We  denote by $\dist_p \colon M \to [0, \infty)$ the {\sl distance function} from this point (defined with the ambient metric) and by $\inj_p(M)$ and $\Cut_p(M)$ the {\sl injectivity radius} and {\sl cut locus} of $p$, respectively.  By taking the infimum over $p$, we define the injectivity radius along the immersion as 
\be
\inj(M) \coloneqq \inf_{p \in \Sigma} \inj_p(M).
\ee


All area and length integrals are taken on the parameter surface with its induced metric, so distinct sheets are counted separately. In particular, $|\Sigma\cap\Cut_p(M)|$ means the length of $\varphi^{-1}(\Cut_p(M))$. Exact decompositions across the cut locus require the finite-length hypothesis in the equation~\eqref{cutSigma}, below. An estimate with infinite cut length is a trivial extended-real inequality.

\subsection*{Non-positively curved spaces}

We are going to derive quantitative bounds on the Willmore energy in terms of geometrical quantities such as the self-intersection number of the surface $k(\Sigma)$ and the injectivity radius of the ambient manifold $\inj(M)$. Our most general estimate will be stated only later in Section~\ref{sec:WF} and only requires that the sectional curvature of the ambient manifold is bounded above. In this introductory section, we find it convenient to state our result under the stronger assumption that the sectional curvature is non-positive. (We refer to Theorem~\ref{theo:46} below for a more general statement.) 

\begin{theorem}[Willmore functional for non-positively curved spaces. I]
\label{thm:Will1}
Let $(M, g)$ be a complete Riemannian $3$-manifold with non-positive sectional curvature
and $\Sigma \hookrightarrow M$ a closed, oriented immersed surface. Then, the Willmore energy of $\Sigma$ satisfies the lower bound
\bel{eq:190-infi}
{1 \over 4 \pi} \Wscr(\Sigma) \geq k(\Sigma)
 - \inf_{\substack{0<r<d\\r\ {\rm regular}}} {|\Sigma \cap S_{r}(p)| \over 2 \pi \, r}
\ee
with $d\coloneqq\dist_p( \Sigma \cap \Cut_p(M))$, 
where $p$ is a point achieving $k(\Sigma)=k_p(\Sigma)$, 
$S_{r}(p)$ is the foliation by geodesic spheres centered at $p$,
and $|\Sigma\cap S_r(p)|$ denotes the length of the intersection. The infimum is over regular values of $\dist_p\circ\varphi$, and $d=+\infty$ if the cut set is empty.
\end{theorem}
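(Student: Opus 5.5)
We follow the Li--Yau/Simon monotonicity argument, replacing Euclidean position vectors by the gradient of the ambient distance. Fix $p$ with $k_p(\Sigma)=k(\Sigma)$ and set $\rho\coloneqq \dist_p\circ\varphi$. Inside $B_d(p)$, where $d=\dist_p(\Sigma\cap\Cut_p(M))$, the function $\rho$ is smooth away from $\varphi^{-1}(p)$. Let $X\coloneqq \tfrac12\nabla(\dist_p^2)=\dist_p\,\nabla\dist_p$. By Hessian comparison for nonpositive sectional curvature, $\Hess(\tfrac12\dist_p^2)\geq g$ on $B_d(p)\setminus\{p\}$. Tracing over $T\Sigma$ then gives
\[
\dv_\Sigma X^\top \;\geq\; 2 + 2H\,\la X,n\ra .
\]
This is exactly the Euclidean identity with the equality replaced by an inequality in the favourable direction.

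For $0<\sigma<r<d$, with $r,\sigma$ regular values of $\rho$, we integrate the vector field
\[
Y\coloneqq \Big(\frac{1}{\max(\rho,\sigma)^2}-\frac1{r^2}\Big)_+ X^\top
\]
over $\Sigma_r\coloneqq\{\rho<r\}$, as in Simon's derivation of the monotonicity formula. Using the inequality above, the divergence theorem on $\Sigma_r$ and the fact that $Y$ vanishes on $\{\rho=r\}$, we would get after the standard algebra
\[
\frac{|\Sigma_\sigma|}{\sigma^2}\;\le\;\frac{|\Sigma_r|}{r^2}+\int_{\Sigma_r}\Big(\tfrac14 H^2\cdots\Big)-\text{(a nonnegative term)}.
\]
Completing the square, $\big|\tfrac12 H n+X^\perp/\rho^2\big|^2\ge0$, should yield
\[
\pi\,\frac{|\Sigma_\sigma|}{\pi\sigma^2}\le \frac{|\Sigma_r|}{r^2}+\frac14\int_{\Sigma_r}H^2\,d\Acal_\Sigma+\frac{1}{r^2}\int_{\Sigma_r}H\la X,n\ra .
\]
Here we have written $\int H^2/4$, with the paper's $H$ being half the trace. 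The two boundary terms at radius $r$ are then controlled by the coarea formula. Specifically, $\frac1{r^2}\int_{\Sigma_r}|X^\top|$-type terms, and the term $\int_{\Sigma_r}H\la X,n\ra$ after a further divergence-theorem integration, reduce to $\frac{1}{r}|\Sigma\cap S_r(p)|$, using $|X|=\rho\le r$ and $|\nabla^\Sigma\rho|\le1$.

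As $\sigma\to0$, each of the $k$ preimages of $p$ contributes density $\pi$, because the ambient metric is Euclidean to first order and the sheets are smooth. Hence $\lim_{\sigma\to0}|\Sigma_\sigma|/\sigma^2\ge \pi k(\Sigma)$. Since $\Sigma_r\subset\Sigma$, we bound $\int_{\Sigma_r}H^2\le\Wscr(\Sigma)$, which would give
\[
\pi k(\Sigma)\le \tfrac14\Wscr(\Sigma)+\tfrac12\,\frac{|\Sigma\cap S_r(p)|}{r}
\]
for every regular $r<d$. Taking the infimum over $r$ and dividing by $\pi$ gives \eqref{eq:190-infi}. The case $d=+\infty$ needs no boundary term: for $r>\max\rho$ we have $\Sigma\cap S_r=\emptyset$, which recovers Li--Yau.

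The main obstacle is the exact bookkeeping of the boundary terms at $\{\rho=r\}$, so that the constant in front of $|\Sigma\cap S_r|/r$ comes out precisely as $2\cdot\frac1{2\pi}$ after normalization. In the closed Euclidean case these terms vanish, so there is no model to copy. My first attempt would be to integrate $\dv_\Sigma(X^\top/\rho^2)$ and $\dv_\Sigma X^\top$ separately over $\Sigma_r$, writing each boundary flux as $\int_{\partial\Sigma_r}\la X^\top,\nu\ra\le r\,|\partial\Sigma_r|$. I would then use Cauchy--Schwarz, $\int H\la X,n\ra/r^2\le \frac14\int H^2+\int |X^\perp|^2/r^4$, so that the $|X^\perp|^2$ terms cancel against the flux identity. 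A secondary issue is justifying the comparison inequality up to, but not across, the cut locus. This is why $r<d$ is required and only regular values are used, so that $\partial\Sigma_r$ is a smooth curve and the coarea formula applies.
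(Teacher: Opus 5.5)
Your route works: it ends at exactly the paper's intermediate inequality
\[
2\pi k_p\le \frac12\int_{\Sigma_r}H^2\,d\Acal_\Sigma+\frac1r\int_{\{u=r\}}|\nabla_\Sigma u|\,d\Lcal_\Sigma ,
\]
but you get there by Simon's monotonicity argument rather than the paper's direct flux argument.

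\textbf{The paper's route.} It applies its divergence identity to the single field $\psi(r)\del_r$, with $\psi=1/r$ truncated at $r_0$. The truncation is realised by nondecreasing cutoffs tending to a step, keeping the factor $|\nabla_\Sigma u|^2$ and finishing with coarea. Three ingredients then do the work:
\begin{itemize}
\item the square completion of Lemma~\ref{lem:120} absorbs $\langle n,\del_r\rangle^2$ pointwise;
\item Hessian comparison gives $\rho_p\ge0$;
\item the outer flux is directly $\frac1{r_0}\int_{\{u=r_0\}}|\nabla_\Sigma u|$, and the multiplicity enters as the inner flux $-2\pi k_p$ through small circles.
\end{itemize}

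\textbf{Your route.} Your field $(\max(u,\sigma)^{-2}-r^{-2})_+X^\top$ vanishes on $\{u=r\}$. So the multiplicity appears instead as the area ratio $\sigma^{-2}|\Sigma_\sigma|\to\pi k$. The radius-$r$ information appears as an area term plus an $H$-term, which you then reassemble into the flux by a second divergence theorem.

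\textbf{How they relate.} The two proofs differ exactly by the divergence identity for $X^\top/r^2$ on $\Sigma_r$. The discarded curvature and square terms coincide as $\sigma\to0$. Your fallback ``first attempt'' (integrating $\dv_\Sigma(X^\top/u^2)$ over $\Sigma_r\setminus\Sigma_\sigma$, with the $|X^\perp|^2$ terms cancelling pointwise) is precisely the paper's proof.

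\textbf{What each buys.} The paper's version is shorter and isolates $\Dscr_p$ and $\Rscr_p$. With general cutoffs $f$ it also yields the $\Fscr_n$ bounds. Yours gives a full monotonicity inequality for every $\sigma<r$ with an explicit square remainder, and the density at $p$ comes from area ratios rather than a flux computation.

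\textbf{The bookkeeping step.} The step you flagged as the main obstacle is a single identity and needs no Cauchy--Schwarz. In the paper's convention, $\dv_\Sigma X^\top=\dv_\Sigma X-2H\langle n,X\rangle\ge 2-2H\langle n,X\rangle$. Hence for regular $r<d$,
\[
\frac{|\Sigma_r|}{r^2}-\frac1{r^2}\int_{\Sigma_r}H\langle n,X\rangle\,d\Acal_\Sigma\le\frac1{2r^2}\int_{\{u=r\}}\langle X^\top,\nu\rangle\,d\Lcal_\Sigma=\frac1{2r}\int_{\{u=r\}}|\nabla_\Sigma u|\,d\Lcal_\Sigma .
\]
This is exactly your pair of $r$-terms; your $+$ sign corresponds to the opposite orientation of $n$. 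It gives the constant $\frac12$ you claimed.

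\textbf{Two small slips.}
\begin{itemize}
\item Your displayed monotonicity inequality omits the inner term $\sigma^{-2}\int_{\Sigma_\sigma}H\langle n,X\rangle\,d\Acal_\Sigma$. This is harmless, since it is $O(\sigma)$.
\item Hessian comparison should be asserted only at points of $\varphi(\{u<d\})\subset M\setminus\Cut_p(M)$, not on all of $B_d(p)$, which may contain cut points off the surface.
\end{itemize}
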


Under the assumptions of Theorem~\ref{thm:Will1}, we immediately deduce by the cutoff calculation in Section~\ref{sec:33}, below, that the Willmore energy of $\Sigma$ satisfies the lower bound
\bel{eq:190}
{1 \over 4 \pi} \Wscr(\Sigma) \geq k(\Sigma) - \inf_{n>0} \Fscr_n(\Sigma),
\ee
where for finite $\inj(M)$ we set
\be
\Fscr_n(\Sigma) \coloneqq {n \over 2 \pi \big( \inj(M) \big)^2} \max_{p \in \Sigma} \int_\Sigma \Bigg({\dist_p \over \inj(M)} \Bigg)^{n-2} \, d\Acal_\Sigma.
\ee
If $\inj(M)=+\infty$, the cutoff limit $R\to+\infty$ gives the bound with $\Fscr_n(\Sigma)=0$. In particular, taking $n=1$ and $n=2$, respectively, we get the geometrically significant quantities, which we refer to as the {\sl scaled area functional}
\be
\Ascr(\Sigma) \coloneqq \Fscr_2(\Sigma) = {1 \over \pi \inj(M)^2} \int_\Sigma d\Acal_\Sigma,
\ee
and the {\sl scaled length functional}
\be
\Lscr(\Sigma) \coloneqq \Fscr_1(\Sigma) = {1 \over 2\pi \inj(M)} \, \max_{p \in \Sigma} \int_\Sigma \frac{d\Acal_\Sigma}{\dist_p}.
\ee
Then, from \eqref{eq:190}, we deduce the (weaker) inequality
\bel{eq:191}
{1 \over 4 \pi} \Wscr(\Sigma) \geq k(\Sigma) - \min \big( \Lscr(\Sigma), \Ascr(\Sigma)\big).
\ee

The estimate in the equation~\eqref{eq:191} is stronger than the elementary bound $\Wscr(\Sigma)\geq0$ whenever $\min(\Lscr(\Sigma),\Ascr(\Sigma))<k(\Sigma)$. It also gives an explicit sufficient condition for embeddedness: if
\be
\frac{\Wscr(\Sigma)}{4\pi}+\min\big(\Lscr(\Sigma),\Ascr(\Sigma)\big)<2,
\ee
then $k(\Sigma)<2$. Since the multiplicity is a positive integer, the immersion is injective and, by compactness, is an embedding.

The functionals $\Fscr_n$ are dimensionless and are intended to ``count'' the number of times the surface $\Sigma$ is ``going beyond'' the injectivity radius 
of the ambient space. For instance, concerning the area $\Ascr(\Sigma)$, we may consider the example where $M$ is a flat torus $[0,a]^2\times[0, b]$ while $\Sigma \subset M$ is a round sphere with radius $c$ so that, provided $a < b$ and $c < b$, the surface $\Sigma$ is ``wrapped'' around $M$, with $\inj(M) = \tfrac{a}{2}$, and we find $\Ascr(\Sigma) = 16 (c/a)^2$, which involves the ``counting ratio'' $c/a$.

A second main result, also stated now, will be established in this paper and provides us with a lower bound, expressed as a function of the self-intersection number of the surface and the length of the intersection of the surface with the cut locus from $p$.

\begin{theorem}[Willmore functional for non-positively curved spaces. II]
\label{theo:34-e}
Let $\Sigma \hookrightarrow M$ be a closed surface 
immersed in a non-positively curved space $(M,g)$.
If $p \in \Sigma$ is a $k$-fold point of this surface, then one has
\bel{Simon5-ee}
{1 \over 2} \Wscr(\Sigma) \geq
 2 \pi k_p(\Sigma) 
 - 2 \, {|\Sigma\cap\Cut_p(M)|\over\inj(M)}. 
\ee  
\end{theorem}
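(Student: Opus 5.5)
We adapt the Li--Yau/Simon monotonicity argument around the $k$-fold point $p$, working with the distance function $r=\dist_p$ on $M$. Off $\Cut_p(M)\cup\{p\}$ the function $r$ is smooth. Since the sectional curvature is non-positive, the Hessian comparison theorem gives $\Hess\, r^2\ge 2g$ there. We will test the first variation of area against the vector field
\[
X=\tfrac12\nabla r^2 = r\,\nabla r,
\]
cut off at a radius $\rho$, and let $\rho\to 0$. The key object is the divergence identity on $\Sigma$:
\[
\dive_\Sigma X = -2H\langle X,n\rangle + \dive_\Sigma X^{T},
\]
combined with the lower bound $\dive_\Sigma X\ge 2$, which follows from the Hessian comparison.

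First, we apply Simon's standard completion-of-squares to the weighted field
\[
Y=\Bigl(\tfrac{1}{\sigma^{2}}-\tfrac{1}{\rho^{2}}\Bigr)_{+}X,\qquad \sigma=\max(r,\sigma_0),
\]
or directly to $X/r^2$ on $\{\sigma_0<r\}$. Integrating the tangential divergence over $\varphi^{-1}(\{r>\sigma_0\}\setminus\Cut_p(M))$ produces the usual monotonicity inequality:
\[
\frac{|\Sigma\cap B_{\sigma_0}(p)|}{\sigma_0^2}+\int \Bigl|\tfrac14 Hn+\tfrac{X^\perp}{r^2}\Bigr|^2 \le \frac1{16}\Wscr(\Sigma)+\text{(boundary terms on } \Cut_p\text{)}.
\]
Here curvature enters only through $\Hess\, r^2\ge 2g$. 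The left side tends to $\pi k_p(\Sigma)$ as $\sigma_0\to0$, because near $p$ the surface consists of $k_p$ smooth sheets through $p$, each asymptotically a plane disc of area $\pi\sigma_0^2$. After rescaling constants as in Simon's form, $\tfrac12\Wscr\ge 2\pi k_p$ holds in the absence of cut contributions. We therefore expect to land on precisely the normalization \eqref{Simon5-ee}.

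Second, we handle the cut locus. Since $\nabla r$ is discontinuous across $\Cut_p(M)$, the divergence theorem on $\Sigma\setminus\varphi^{-1}(\Cut_p)$ yields a boundary integral of the conormal component of $X^{T}/r^2$ along $\varphi^{-1}(\Cut_p(M))$, counted from both sides. We assume finite length as in \eqref{cutSigma}; otherwise the statement is trivial. On this set we have $|X^T|/r^2\le 1/r$, and $r\ge\inj_p(M)\ge\inj(M)$ on the cut locus. Each side contributes at most $|\Sigma\cap\Cut_p(M)|/\inj(M)$, so the jump costs at most $2|\Sigma\cap\Cut_p(M)|/\inj(M)$. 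Distributional concavity of $r$ across the cut locus gives the favourable sign, but we will simply bound the contribution in absolute value. This produces exactly the subtracted term in \eqref{Simon5-ee}.

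The main obstacle is justifying the integration by parts on the set $\Sigma\setminus\varphi^{-1}(\Cut_p)$. This set may be non-smooth, and $\varphi$ may meet $\Cut_p$ non-transversally. We plan to use an exhaustion by regular sublevel sets of a smoothed $\dist$ to $\varphi^{-1}(\Cut_p)$, together with the finite-length hypothesis and the coarea formula, to pass to the limit in the boundary terms. Second, we must track constants so that the Li--Yau normalization $\tfrac12\Wscr$ versus $2\pi k_p$ and the factor $2$ come out as stated. We will fix these by checking the case of a geodesic sphere in $\RR^3$ with no cut locus, where equality should hold.
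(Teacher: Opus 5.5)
Your route is the paper's argument written in Simon's form. Testing with $X/r^2=\del_r/r$ is the choice $\psi=1/r$ in Proposition~\ref{prop:fundamental}, which gives $\frac12\Wscr(\Sigma)=2\pi k_p+\Dscr_p+\Rscr_p-\Cscr_p$. Your completed square is $\Dscr_p\geq0$, your bound $\dive_\Sigma X\geq2$ is $\Rscr_p\geq0$ (Lemma~\ref{deltapos}), and your inner area ratio $\sigma_0^{-2}|\Sigma\cap B_{\sigma_0}(p)|\to\pi k_p$ replaces the paper's flux through $\Sigma\cap S_\sigma(p)$. There are two small slips. First, with the paper's scalar $H=\frac12\tr\II$, the square is $|\frac12Hn-X^\perp/r^2|^2$ and the right-hand side is $\frac14\Wscr$; your $\frac14$ and $\frac1{16}$ belong to the mean curvature vector $-2Hn$. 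In that form the cut term is $\frac12\Cscr_p$, and doubling gives the constant $2$. Second, the concavity sign is not favourable: it makes $\Cscr_p\geq0$, which enters with a minus sign, and that is exactly why it has to be bounded from above.

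The genuine gap is the step you yourself flag as the main obstacle. You bound the flux of $\nabla_\Sigma u/u$ in absolute value through the level sets $\{\dist(\cdot,E_p)=t\}$. The resulting error is of order $\liminf_{t\to0}\Hcal^1(\{\dist(\cdot,E_p)=t\})/\inj(M)$. The coarea formula controls this only by twice the lower Minkowski content of $E_p$, not by $2\Hcal^1(E_p)$, and for closed sets of finite, even zero, length the two can differ arbitrarily. Such sets do occur. In the flat cylinder of Figure~\ref{fig:cylinder-cut-locus}, a sheet of a closed surface can touch the cut plane $x=a$ from one side along any closed set $F$, as a graph $x=a-f$ with $f\geq0$ and $f^{-1}(0)=F$. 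Take $F$ countable and compact of box dimension $2$. Then $\Hcal^1(F)=0$, so the theorem allows no cut correction from this sheet. Yet the lengths of $\{\dist(\cdot,F)=t\}$ blow up as $t\to0$, and your absolute-value estimate gives nothing, even though $u$ is smooth across $F$. Likewise, ``each side contributes'' presupposes that $E_p$ is made of two-sided arcs.

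The paper avoids any exhaustion (Lemma~\ref{cut-measure-lemma}). Since $u=\dist_p\circ\varphi$ is locally semiconcave off $Z_p$, $\nabla_\Sigma u\in BV_{\rm loc}$ and $\Delta_\Sigma u$ is a measure. Its Cantor part vanishes on the finite-length set $E_p$. Its jump part is $\langle\nabla_\Sigma u^+-\nabla_\Sigma u^-,N\rangle\,\Hcal^1$ on the rectifiable jump set, with $|\nabla_\Sigma u^\pm|\leq1$. This yields the cut measure $\beta_p\,\Hcal^1\restriction E_p$ with $0\leq\beta_p\leq2$, and hence $\Cscr_p\leq2|\Sigma\cap\Cut_p(M)|/\inj(M)$ for every $E_p$ of finite length (Lemma~\ref{lemma:51}). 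You need this structure, or an explicit regularity assumption on $E_p$, to close the argument.
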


Furthermore, in this paper we also propose a generalization of the above results to ambient spaces with {\sl curvature bounded above.} 
First, we observe that for ambient spaces with curvature bounded above, there are analogues of Theorems~\ref{thm:Will1} and \ref{theo:34-e}, which, however, are not very natural if a sharp bound is sought. We are thus led to propose a new functional of the form (cf.~Section~\ref{sec:WF} for the notation) 
\be
\Wscrt_{K,p} (\Sigma) \coloneqq \int_\Sigma {H^2 \over \sn_K'(\dist_p)}\, d\Acal_\Sigma,  
\ee
which is defined for $\max_\Sigma\dist_p<\pi/(2\sqrt K)$ when $K>0$ and is referred to as the {\sl $K$-weighted Willmore functional}. Our functional depends on the upper bound of the sectional curvature, denoted by $K$, and, by relying on a comparison argument for manifolds with curvature bounded above, can be estimated below. Cf.~Theorems~\ref{theo:34-ff} and ~\ref{theo:46}.


\subsection*{Outline of this paper}

In Section~\ref{sec:2}, we generalize the Simon identity~\cite{Simon} and we explain how to distinguish between contributions due to the ambient curvature, the cut locus, etc. Our identity therein involves an arbitrary test-function which is specified in the following sections. In Section~\ref{sec:3}, we introduce a decomposition of the Willmore energy and we give a proof of Theorem~\ref{theo:34-e}. In Section~\ref{sec:33}, we establish Theorem~\ref{thm:Will1} by making a suitable choice of the test-function arising in Section~\ref{sec:2}. Next, in Sections~\ref{sec:4} and \ref{sec:WF}, we study surfaces in an ambient space with curvature bounded above and, in Theorems~\ref{theo:34-ff} and ~\ref{theo:46}, we provide a generalization of the theorems stated in this introduction.


\section{An identity for immersed surfaces in curved space}
\label{sec:2}

Our first task is to generalize an identity established by Simon~\cite{Simon} and Topping~\cite{Topping:MCF} for surfaces in the Euclidean space. Relying on the notation and assumptions in Section~1, we thus consider a closed surface $\Sigma$ immersed in a Riemannian space $(M,g)$. 

We write $u\coloneqq\dist_p\circ\varphi$, $Z_p\coloneqq\varphi^{-1}(p)$, and $E_p\coloneqq\varphi^{-1}(\Cut_p(M))$. Given a point $p \in \Sigma$, we consider the distance function $r\coloneqq\dist_p$ from $p$ together with the unit vector field $\del_r \coloneqq \grad_g r$. This vector field is tangent to the radial geodesics from $p$ and is well-defined and smooth {\sl except} at the origin $p$ and on the cut locus $\Cut_p(M)$. Throughout, we assume that the intersection of the cut locus of $p$ with the surface $\Sigma$ has finite length on the parameter surface: 
\bel{cutSigma}
\Hcal^1_{g_\Sigma}(E_p)<+\infty.
\ee
This hypothesis implies that $E_p$ has zero surface area. It is required for the exact global cut identities, but not for local estimates supported away from the cut set. Consequently, an integral of an integrable function over $\Sigma$ can be replaced by an integral over $\Sigma \setminus \Cut_p(M)$. 
Furthermore, $N$ denotes the unit conormal in the parameter surface pointing from the minus side to the plus side of the rectifiable jump set. The symbols $(\del_r^\pm)^\parallel$ mean the two approximate traces of $\nabla_\Sigma u$ on that surface. By Lemma~\ref{cut-measure-lemma}, proved in the appendix,
\be
(\del_r^- -\del_r^+)^\parallel=\beta_pN,\qquad
\nu_p\coloneqq\beta_p\Hcal^1_{g_\Sigma}\restriction E_p,\qquad0\leq\beta_p\leq2.
\ee
The density is extended by zero off the jump set. Thus, with the sign fixed by the jump convention above, cut integrals in this article are intrinsic measure integrals of the form $\int_{E_p}\psi(u)\,d\nu_p$, where $\psi$ is its radial weight. It does not require a smooth boundary for $\Sigma\setminus\Cut_p(M)$ or the existence of two ambient radial limits at every cut point.

For locally Lipschitz radial functions, we set the derivative to zero at points of nondifferentiability. This convention is compatible with the tangential chain rule, since $\nabla_\Sigma u=0$ almost everywhere on the inverse image of a null set of radial values.

We need some more notation. In particular, the definitions below apply to the vector field $\psi \, \del_r$, where $\psi = \psi(r)$ is an arbitrary function of the  distance function $r=\dist_p$ from $p$. Given a Lipschitz continuous vector field $X$ defined on a neighborhood of the surface ${\Sigma \setminus \Cut_p(M)}$ and given a point $q$ in ${\Sigma \setminus \Cut_p(M)}$,
 we introduce the orthogonal projection 
$
X^\parallel \coloneqq X - \la X, n \ra n
$
 onto the tangent space $T_q \Sigma$. (Recall that $n$ denotes the unit normal to $\Sigma$.) By letting $\{ e_1, e_2 \}$ be any orthonormal frame for the metric $g_\Sigma$ (which exists at least locally in the manifold), the divergence of the vector field under consideration is defined as
\bel{eq:dive}
\dive_\Sigma X 
\coloneqq\la \nabla_{e_1} X, e_1 \ra + \la \nabla_{e_2} X, e_2 \ra
= \dv_g X - \la \nabla_n X, n \ra
\ee
and is well-defined in $\Sigma \setminus \Cut_p(M)$.

We are now in a position to state the following result.

\begin{proposition}[An identity for immersed surfaces in curved space]
\label{prop:fundamental}
Let $\Sigma$ be a closed, smoothly immersed surface in a Riemannian space $(M,g)$ satisfying the equation~\eqref{cutSigma}. 
Given a point $p \in \Sigma$ of $k$-fold self-intersection of this surface and any locally Lipschitz function $\psi\colon\RR^+\to\RR$ for which the densities below are integrable and with finite limit
\bel{eq:phi}
\psi_0\coloneqq\lim_{s \to 0} s \, \psi(s), 
\ee
one has
\bel{Simon2}
\aligned
& 2 \pi \, \psi_0 \, k_p 
  = \int_\Sigma \delta_p\, d\Acal_\Sigma
- \int_{\Sigma \setminus \Cut_p(M)} \hskip-.15cm \rho_p \, d\Acal_\Sigma
- \int_{{\Sigma \cap \Cut_p(M)}} \hskip-.15cm \psi (r)\, \la \del_r^+ - \del_r^-, N \ra \, d\Lcal_\Sigma,
\endaligned
\ee
where $d\Lcal_\Sigma$ denotes the induced volume form on $\Sigma \cap \Cut_p(M)$ and we have set 
\bel{eq:defchitheta}
\aligned
{1 \over 2} \delta_p \coloneqq \, &  \psi(r) \, \la n, \del_r \ra \, H + \psi'(r) \, \la n, \del_r \ra^2,
\\
\rho_p \coloneqq \, & \tr_{g_\Sigma} \Big( \psi(r) \Hess r + \psi'(r) ( g - d r \otimes dr ) \Big).
\endaligned
\ee
\end{proposition}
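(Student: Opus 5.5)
Apply the divergence theorem on the parameter surface to the tangential vector field $X\coloneqq \psi(r)\,\del_r^\parallel=\psi(u)\nabla_\Sigma u$. This field is locally Lipschitz on $\Sigma\setminus(Z_p\cup E_p)$. We excise geodesic discs $D_\eps(q)$ around each of the $k_p$ points $q\in Z_p$ and handle the cut set through its jump measure. The proof then consists of three computations.

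\emph{(i) Interior divergence.} On $\Sigma\setminus(Z_p\cup E_p)$, write $\del_r=\del_r^\parallel+\la n,\del_r\ra n$ and use \eqref{eq:dive}:
\be
\dive_\Sigma(\psi\,\del_r^\parallel)=\psi'(r)\,|\del_r^\parallel|^2+\psi\,\dive_\Sigma\del_r-\psi\,\dive_\Sigma\big(\la n,\del_r\ra n\big).
\ee
Three facts evaluate these terms.
\begin{itemize}
\item $\dive_\Sigma\del_r=\tr_{g_\Sigma}\Hess r$.
\item $\dive_\Sigma(\la n,\del_r\ra n)=\la n,\del_r\ra\,\dive_\Sigma n=2H\la n,\del_r\ra$, since $n^\parallel=0$ and $\II(X,Y)=\la\nabla_Xn,Y\ra$.
\item $|\del_r^\parallel|^2=1-\la n,\del_r\ra^2$.
\end{itemize}
Moreover $\tr_{g_\Sigma}(g-dr\otimes dr)=2-|\del_r^\parallel|^2=1+\la n,\del_r\ra^2$. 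Combining,
\be
\dive_\Sigma X=\rho_p-2\psi'\la n,\del_r\ra^2-2\psi H\la n,\del_r\ra=\rho_p-\delta_p .
\ee
This matches \eqref{eq:defchitheta} up to the overall sign.

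\emph{(ii) Contribution of the $k$-fold point.} Near each preimage $q\in Z_p$, the sheet through $q$ is a smooth surface passing through $p$, and $u$ is comparable to the intrinsic distance $s$ from $q$. The outward boundary term of $\Sigma\setminus D_\eps(q)$ on $\del D_\eps(q)$ is $-\int_{\del D_\eps}\psi(u)\la\nabla_\Sigma u,\nu_\eps\ra$. Here $\nabla_\Sigma u\to\nu_\eps$ (the radial direction), and the length of $\del D_\eps$ is $2\pi\eps(1+O(\eps))$. Since $u=\eps(1+O(\eps))$, the finite limit \eqref{eq:phi} gives $\psi(u)\sim\psi_0/\eps$. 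Hence each sheet contributes $-2\pi\psi_0$ as $\eps\to0$, for a total of $-2\pi\psi_0k_p$.

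The removed discs carry no interior contribution in the limit: $\delta_p$ and $\rho_p$ are $O(\psi)+O(\psi')\la n,\del_r\ra^2$, where $\la n,\del_r\ra=O(s)$, and they are assumed integrable. The cut locus avoids a neighbourhood of $p$, so the discs can be taken disjoint from $E_p$.

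\emph{(iii) Cut contribution.} Here $\psi(u)\nabla_\Sigma u$ is a BV-type field whose distributional divergence on $\Sigma\setminus\bigcup D_\eps$ equals the absolutely continuous part $\rho_p-\delta_p$ plus the jump part $\psi(u)\la(\del_r^+-\del_r^-)^\parallel,N\ra\,\Hcal^1\restriction E_p$. This is where Lemma~\ref{cut-measure-lemma} is used: it guarantees that the jump set is rectifiable, that $\nabla_\Sigma u$ has approximate traces, and that there is no Cantor part. The first two properties follow because $u$ is locally semiconcave away from $p$, so $\nabla_\Sigma u$ is BV; \eqref{cutSigma} gives finite total mass of the jump part. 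Since the full divergence of a compactly supported field on the closed surface integrates to zero,
\be
0=\int_{\Sigma\setminus\Cut_p}(\rho_p-\delta_p)+\int_{E_p}\psi\la\del_r^+-\del_r^-,N\ra\,d\Lcal_\Sigma-2\pi\psi_0k_p .
\ee
Rearranging gives \eqref{Simon2}.

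The main obstacle is step (iii): justifying the Gauss--Green formula with only the measure-theoretic jump structure, since $\Sigma\setminus\Cut_p$ need not have a smooth boundary. I would mollify $u$, or use the semiconcavity of $\dist_p$ to get BV regularity of $\psi(u)\nabla_\Sigma u$, and then invoke the BV divergence theorem, with the jump density identified by the lemma. A secondary check is the integrability near $p$ when $\psi\sim1/s$.
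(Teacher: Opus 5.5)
This is essentially the paper's proof. The paper applies the first-variation formula (Lemma~\ref{IntegrationLemma}) to the ambient field $\psi(r)\,\del_r$, which amounts to Gauss--Green for its tangential part $\psi(u)\nabla_\Sigma u$. It then computes $\dive_\Sigma(\psi\,\del_r)$ in Lemma~\ref{200}. Combined, these give exactly your $\dive_\Sigma(\psi(u)\nabla_\Sigma u)=\rho_p-\delta_p$, with the same jump term from Lemma~\ref{cut-measure-lemma} and the same inner-circle limit $-2\pi\psi_0k_p$.

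You do need to fix the sign in your final display. In (ii) you computed the outward flux through the excised circles as $-2\pi\psi_0k_p$. Equivalently, on all of $\Sigma$ the divergence of $\psi(u)\nabla_\Sigma u$ carries atoms $+2\pi\psi_0$ at the points of $Z_p$, so the field is not compactly supported away from its singularities. The identity therefore reads $0=\int_{\Sigma\setminus\Cut_p(M)}(\rho_p-\delta_p)\,d\Acal_\Sigma+\int_{E_p}\psi\la\del_r^+-\del_r^-,N\ra\,d\Lcal_\Sigma+2\pi\psi_0k_p$. Only with this sign does rearranging give \eqref{Simon2}. As you wrote it, you get the negative, which already fails for a round sphere in $\RR^3$ with $\psi=1/r$: there $\rho_p=0$ and $\int_\Sigma\delta_p\,d\Acal_\Sigma=2\pi$.
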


Observe that the densities are evaluated almost everywhere off $E_p\cup Z_p$. For the radial fields used below, they are integrable; at $Z_p$ this follows from normal coordinates and away from $Z_p$ from Lemma~\ref{cut-measure-lemma}, below. We now proceed with the proof of Proposition~\ref{prop:fundamental}. 
Given a sufficiently small real $\sigma>0$, we remove from the surface the geodesic ball $B_\sigma(p)$ centered at the point $p$, by defining
\be
\aligned
& \SigmaMCut_\sigma \coloneqq \Big( \Sigma \setminus \Cut_p(M) \Big) \setminus B_\sigma(p),
\qquad
 \SigmaCCut_\sigma \coloneqq \Big( \Sigma \cap \Cut_p(M) \Big) \setminus B_\sigma(p).
\endaligned
\ee
We are going to integrate (the divergence of a vector field) on the subset ${\SigmaMCut_\sigma}$, whose boundary terms consist of the inner-circle fluxes and the jump measure on the cut set
\be
\del \SigmaMCut_\sigma =  \Big( \Sigma \cap S_\sigma(p) \Big) \cup \Big( \Sigma \cap \Cut_p(M) \Big),
\ee
where $S_\sigma(p)$ denotes the sphere centered at $p$ of radius $\sigma$
and, since $\sigma$ is sufficiently small, these two components are disjoint.

\begin{lemma}[Divergence formula for vector fields]
\label{IntegrationLemma}
Under the conditions in Proposition~\ref{prop:fundamental} and provided the tangential projection of $X$ has locally bounded variation away from $Z_p$, integrable regular derivatives, integrable normal component, and only jump derivatives supported on $E_p$, one has (with the notation already introduced)
\be
\aligned
& \int_{\SigmaMCut_\sigma} \left( \dv_\Sigma X\right) d\Acal_\Sigma
\\
& =
2 \int_{\SigmaMCut_\sigma} \la n, X \ra \, H \, d\Acal_\Sigma
-
\int_{{\Sigma \cap \Cut_p(M)}} \la N, X^\parallel_+ - X^\parallel_- \ra \, d\Lcal_\Sigma
+
\int_{\Sigma \cap S_\sigma(p)} \la N, X^\parallel \ra \, d\Lcal_\Sigma,
\endaligned
\ee
where, in the latter integral, $d\Lcal_\Sigma$ stands for the induced volume form on $\Sigma \cap S_\sigma(p)$. 
\end{lemma}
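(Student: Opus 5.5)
The plan is to split $X$ into its tangential and normal parts, $X = X^\parallel + \la X, n\ra n$, and treat each separately. The normal part produces the mean curvature term, and the tangential part produces the boundary terms through an intrinsic divergence theorem on the parameter surface. Pointwise on $\SigmaMCut_\sigma$, where $X$ is differentiable and $n$ is smooth along the immersion, the Leibniz rule gives
\[
\dv_\Sigma X = \dv_\Sigma X^\parallel + \la X, n\ra \, \dv_\Sigma n,
\]
because the term $\la \nabla_{e_i}\la X,n\ra n, e_i\ra$ keeps only the factor $\la X,n\ra\la\nabla_{e_i} n, e_i\ra$, as $\la n, e_i\ra=0$. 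With the convention $\II(X,Y)=g(\nabla_X n, Y)$ we get $\dv_\Sigma n = \tr\II = 2H$, hence $\dv_\Sigma X = \dv_\Sigma X^\parallel + 2 \la n, X\ra H$. The assumption that $\la X,n\ra$ is integrable, together with the smoothness of $H$ on the closed surface, makes the integral of the second term finite. That term is exactly the first integral on the right-hand side. The choice of sign of $n$ plays no role, since $\la X,n\ra H$ is invariant under reversing both $n$ and $H$.

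It remains to integrate $\dv_\Sigma X^\parallel$ over $\SigmaMCut_\sigma$. I would regard $Y\coloneqq X^\parallel$ as a tangent vector field of locally bounded variation on the compact Riemannian surface with boundary $\Sigma_\sigma\coloneqq \varphi^{-1}(M\setminus B_\sigma(p))$; for generic small $\sigma$ its boundary is the smooth curve $\varphi^{-1}(S_\sigma(p))$. Two facts justify this. First, near $Z_p$ the radial geometry in normal coordinates shows that $\Sigma\cap S_\sigma(p)$ consists of $k_p$ smooth closed curves for small $\sigma$. Second, by hypothesis \eqref{cutSigma} and the assumptions of the lemma, the distributional divergence of $Y$ on the interior of $\Sigma_\sigma$ is the sum of an absolutely continuous part and a jump part. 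The absolutely continuous part is $\dv_\Sigma Y$, computed pointwise off $E_p$. The jump part is concentrated on the $\Hcal^1$-rectifiable set $E_p$ and equals $\la Y_+ - Y_-, N\ra\,\Hcal^1\restriction E_p$, with $N$ pointing from the minus side to the plus side. This is the standard structure theorem for BV fields (Federer--Vol'pert), applied in local charts of the parameter surface. Because $E_p$ has zero area, the integral of the absolutely continuous part over $\SigmaMCut_\sigma$ equals its integral over $\Sigma_\sigma$.

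I would then apply the Gauss--Green formula for BV vector fields on $\Sigma_\sigma$, using a partition of unity and charts. The total distributional divergence, absolutely continuous part plus jump part, integrates to the outward flux of $Y$ through $\varphi^{-1}(S_\sigma(p))$; near this boundary $Y$ is Lipschitz, because $\sigma$ is small and the cut set stays away from $p$. Solving for the absolutely continuous part gives
\[
\int_{\SigmaMCut_\sigma}\dv_\Sigma Y = \text{flux through }\varphi^{-1}(S_\sigma(p)) - \int_{E_p}\la N, Y_+-Y_-\ra\, d\Lcal_\Sigma .
\]
On the circle $\Sigma\cap S_\sigma(p)$, the symbol $N$ denotes the conormal oriented as in the paper's convention. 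In the paper's applications this is the direction of $\nabla_\Sigma u/|\nabla_\Sigma u|$, which is the outward normal of $\Sigma_\sigma$ up to the sign convention fixed for the inner circles. This yields the stated sign for the last term.

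The step I expect to be the main obstacle is the rigorous BV Gauss--Green formula across $E_p$. Here $E_p$ is merely rectifiable with finite length and need not be a union of smooth curves, so the usual piecewise divergence theorem is unavailable. I would handle it by localising in charts and invoking the BV structure theorem, which identifies the jump part as $(Y^+-Y^-)\cdot N\,\Hcal^1$ on the jump set. The lemma's hypothesis that the singular part of $DY$ is purely a jump part supported on $E_p$, with no Cantor part, is precisely what rules out extra contributions. The approximate traces $Y^\pm$ are those described in Lemma~\ref{cut-measure-lemma}.
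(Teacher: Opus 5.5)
Your proposal is correct and follows the paper's own argument. You split pointwise as $\dv_\Sigma X=\dv_\Sigma X^\parallel+2\la n,X\ra H$ (your computation $\dv_\Sigma n=2H$ is the same as the paper's $-\la X,(\nabla_{e_i}e_i)^\perp\ra$ step), then apply the Gauss--Green formula with its jump term on the parameter surface, which you justify via the BV structure theorem in more detail than the paper does.

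One small correction: on the inner circles the conormal is the outward one for $\Sigma_\sigma$, namely $N=-\nabla_\Sigma u/|\nabla_\Sigma u|$, not $+\nabla_\Sigma u/|\nabla_\Sigma u|$. This matches $\la\del_r,N\ra\to-1$ in the proof of Proposition~\ref{prop:fundamental}, and with that choice your outward-flux formula gives the stated $+$ sign directly, with no extra convention needed.
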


\begin{proof} This is the standard first variation formula,  
which is established by writing (in view of~\eqref{eq:dive} and on the set ${\SigmaMCut_\sigma}$) 
\be
\begin{aligned}
\dive_\Sigma X&=
\dv_\Sigma X^\parallel + \sum_{i=1}^2 \la \nabla_{e_i} X^\perp, e_i \ra
= \dv_\Sigma X^\parallel - \sum_{i=1}^2 \la X, \left( \nabla_{e_i} e_i \right)^\perp \ra
\\
&= \dv_\Sigma X^\parallel + \sum_{i=1}^2 \la X,n\ra\,\II(e_i, e_i)
= \dv_\Sigma X^\parallel + 2 \la n, X \ra \, H,
\end{aligned}
\ee
integrating this identity on the surface with the small disks removed, and then applying the Gauss--Green formula with its jump term, justified by Lemma~\ref{cut-measure-lemma}, below.
\end{proof}

\begin{lemma}[Divergence of the radial vector field]
\label{200}
Under the conditions in Proposition~\ref{prop:fundamental} and for an arbitrary function $\psi \colon \RR^+ \to \RR$, one has, 
with $\rho_p$ defined in \eqref{eq:defchitheta}, 
\be
\dive_\Sigma \left( \psi(r) \, \del_r \right)
= - 2 \psi'(r) \, \la n, \del_r \ra^2 + \rho_p \quad 
\text{ almost everywhere on }\Sigma\setminus\Big(\Cut_p(M)\cup\{p\}\Big).
\ee
\end{lemma}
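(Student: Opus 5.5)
The identity is pointwise and purely algebraic once we work at a point $q\in\Sigma\setminus(\Cut_p(M)\cup\{p\})$ where $r$ is smooth. The plan is to expand the surface divergence with the product rule and then convert the ambient Hessian trace into the intrinsic trace appearing in $\rho_p$. The almost-everywhere qualifier only handles points where $\psi$ is not differentiable. Such points are covered by the stated convention $\psi'=0$ there: if $\psi$ fails to be differentiable on a null set of radial values, then $\nabla_\Sigma u=0$ almost everywhere on its preimage, so the tangential chain rule still holds almost everywhere.

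Take an orthonormal frame $\{e_1,e_2\}$ of $T_q\Sigma$ and use \eqref{eq:dive}. The product rule gives
\[
\dive_\Sigma(\psi(r)\del_r)=\sum_{i}\psi'(r)\,(e_i r)\,\la\del_r,e_i\ra+\psi(r)\sum_i\la\nabla_{e_i}\del_r,e_i\ra .
\]
For the first sum, $e_i r=\la\del_r,e_i\ra$. Since $\{e_1,e_2,n\}$ is orthonormal and $|\del_r|=1$, we get $\sum_i\la\del_r,e_i\ra^2=1-\la n,\del_r\ra^2$. For the second sum, $\del_r=\grad r$ gives $\la\nabla_{e_i}\del_r,e_i\ra=\Hess r(e_i,e_i)$, so the sum equals $\tr_{g_\Sigma}\Hess r$. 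Together,
\[
\dive_\Sigma(\psi\,\del_r)=\psi'(r)\big(1-\la n,\del_r\ra^2\big)+\psi(r)\,\tr_{g_\Sigma}\Hess r .
\]

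Next I compute the remaining term of $\rho_p$:
\[
\tr_{g_\Sigma}(g-dr\otimes dr)=2-\sum_i\la\del_r,e_i\ra^2=2-\big(1-\la n,\del_r\ra^2\big)=1+\la n,\del_r\ra^2 .
\]
Hence $\rho_p=\psi\,\tr_{g_\Sigma}\Hess r+\psi'(1+\la n,\del_r\ra^2)$. Subtracting $2\psi'\la n,\del_r\ra^2$ gives $\psi\,\tr_{g_\Sigma}\Hess r+\psi'(1-\la n,\del_r\ra^2)$, which matches the divergence above and proves the claim.

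The main point requiring care is the regularity bookkeeping, not the algebra. Off $\Cut_p(M)\cup\{p\}$ the function $r$ is smooth, so everything above holds pointwise wherever $\psi$ is differentiable at $r(q)$. This is the only reason the statement is phrased almost everywhere.
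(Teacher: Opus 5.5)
Your proposal is correct and is essentially the paper's proof. The paper likewise computes the classical divergence as $\psi\,\tr_{g_\Sigma}\Hess r+\psi'(1-\la n,\del_r\ra^2)$ and identifies it with $\rho_p-2\psi'\la n,\del_r\ra^2$; you have simply written out the frame computation ($\sum_i\la\del_r,e_i\ra^2=1-\la n,\del_r\ra^2$ and $\tr_{g_\Sigma}(g-dr\otimes dr)=1+\la n,\del_r\ra^2$) that the paper leaves implicit, and your handling of the almost-everywhere qualifier via the convention $\psi'=0$ at nondifferentiability points matches the one stated in Section~\ref{sec:2}.
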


\begin{proof}
The classical divergence, at smooth points, is
\be
\dive_\Sigma(\psi\del_r)=\psi\tr_{g_\Sigma}\Hess r
+\psi'(1-\langle n,\del_r\rangle^2)
=\rho_p-2\psi'\langle n,\del_r\rangle^2. \qedhere 
\ee
\end{proof}

\begin{proof}[Proof of Proposition~\ref{prop:fundamental}]  
Applying Lemma~\ref{IntegrationLemma} to the vector field $\psi(r) \, \del_r$ with an arbitrary function $\psi \colon \RR^+ \to \RR$, and re-arranging the terms, we arrive at
\be
\begin{aligned}
& \int_{\SigmaMCut_\sigma} \dive_\Sigma \left( \psi(r) \, \del_r \right) d\Acal_\Sigma
-
2 \int_{\SigmaMCut_\sigma} \psi(r) \, \la n, \del_r \ra \, H\, d\Acal_\Sigma
\\
&=
- \int_{{\Sigma \cap \Cut_p(M)}} \psi(r) \, \la N, \del_r^+ - \del_r^- \ra \, d\Lcal_\Sigma
+
\int_{\Sigma \cap S_\sigma(p)} \psi(r) \, \la N, \del_r \ra \, d\Lcal_\Sigma.
\end{aligned}
\ee
Next, in view of Lemma~\ref{200} and with notation $\delta_p$ in \eqref{eq:defchitheta}, we may rewrite this identity in the form
\bel{Simon2a}
\aligned
&\int_{\SigmaMCut_\sigma} \big( - \delta_p + \rho_p \big) \, d\Acal_\Sigma
\\
& =
- \int_{{\Sigma \cap \Cut_p(M)}} \psi \la \del_r^+ - \del_r^-, N \ra \, d\Lcal_\Sigma
+
\int_{\Sigma \cap S_\sigma(p)} \psi \la \del_r, N\ra \, d\Lcal_\Sigma.
\endaligned
\ee

We now take the limit as $\sigma \to 0$ in the equality~\eqref{Simon2a}. Let $p$ be a self-intersection point covered $k_p$ times in the immersion $\Sigma \to M$, and observe that, for points on $S_\sigma(p)$, $\la \del_r, N\ra \to \la \del_r, - \del_r \ra = -1$ as $\sigma \to 0$. Provided $\lim_{r \to 0} r \psi(r) = \psi_0$,
we can take the limit as $\sigma \to 0$ of the second term on the right-hand-side of~\eqref{Simon2a} and obtain
\be
\lim_{\sigma\to0}\int_{\Sigma\cap S_\sigma(p)}\psi\la\del_r,N\ra\,d\Lcal_\Sigma
=
\lim_{\sigma\to0} \sum_{1}^k \psi_0 {1 \over \sigma} \left( - 2 \pi \sigma \right)
=
- 2 \pi \psi_0 k_p.
\ee
Hence, as $\sigma \to 0$, \eqref{Simon2a} gives
\bel{eq:identity-closed}
\int_{\Sigma \setminus \Cut_p(M)} \big( - \delta_p + \rho_p \big) \, d\Acal_\Sigma
=
- \int_{{\Sigma \cap \Cut_p(M)}} \psi \la \del_r^+ - \del_r^-, N \ra \, d\Lcal_\Sigma
- 2 \pi \psi_0 k_p,
\ee
in which the integration is performed away from the cut locus so that all the integrands are smooth functions except for an (integrable) singularity at the origin $p$ where $r$ vanishes.
The proof of Proposition~\ref{prop:fundamental} is completed.
\end{proof}

 
\subsection*{Remarks on non-closed surfaces}
  
The identity extends under the integrability and radial-flux conditions stated below to surfaces $\Sigma$ that are not closed but admit cylindrical ends, defined as follows (which is a sufficient broad notion in order to apply, for instance, to unit-radius cylinders). 

\begin{definition}
A properly immersed surface $\Sigma$ admits $k_\infty$ \emph{cylindrical ends} if there exists $R>0$ and $p_0 \in \Sigma$ such that $\Sigma \setminus B_R(p_0)$ is the union of $k_\infty$ ``ends'', each being diffeomorphic to the cylinder $[1, +\infty) \times S^1$ and with a metric $g_\Sigma$ whose coefficients converge uniformly in $y$ to $dr_0^2+dy^2$ where $r_0\coloneqq\dist_{p_0}$ and $y \in [0, 2\pi]$ is a coordinate function on the $1$-torus $S^1$.
\end{definition}
 
For the chosen point $p$, assume that the cylindrical coordinates above use $p_0=p$, that the cut set is contained in a compact part and has finite length, and that $\delta_p,\rho_p\in L^1(\Sigma)$. We can revisit the calculation above by assuming that the test-function satisfies
\bel{eq:phi-withEnds}
\psi_0\coloneqq\lim_{s \to 0} s \, \psi(s), \qquad 
\psi_\infty\coloneqq\lim_{s \to +\infty} \psi(s).
\ee
The identity \eqref{Simon2} becomes
 \bel{Simon2-withends}
\aligned
& 2 \pi \big( \psi_0 \, k_p - \psi_\infty \, k_\infty \big)
\\
& = \int_\Sigma \delta_p\, d\Acal_\Sigma
- \int_{\Sigma \setminus \Cut_p(M)} \hskip-.15cm \rho_p \, d\Acal_\Sigma
- \int_{{\Sigma \cap \Cut_p(M)}} \hskip-.15cm \psi (r)\, \la \del_r^+ - \del_r^-, N \ra \, d\Lcal_\Sigma.
\endaligned
\ee
Hence, in order to extend our lower bounds to non-compact surfaces, it suffices to add the contribution $-2\pi\psi_\infty k_\infty$ to the self-intersection contribution $2\pi\psi_0 k_p$. 

To derive \eqref{Simon2-withends}, we integrate first within a large geodesic ball $B_R(p)$ and \eqref{eq:identity-closed} reads (for all sufficiently large $R>0$)
\bel{eq:identity-nonclosed}
\aligned
&  2 \pi \psi_0 k_p
+ \int_{\big( \Sigma \setminus \Cut_p(M) \big) \cap B_R(p)} \big( - \delta_p + \rho_p \big) \, d\Acal_\Sigma
\\
&= - \int_{\big(\Sigma \cap \Cut_p(M) \big) \cap B_R(p)} \psi \la \del_r^+ - \del_r^-, N \ra \, d\Lcal_\Sigma
 + \int_{S_R(p) \cap \Sigma} \psi(r) \la \del_r, N\ra \, d\Lcal_\Sigma,
\endaligned
\ee
where, in the latter integral, $N$ denotes the outward normal unit vector field along the boundary $S_R(p) \cap \Sigma$ of $B_R(p) \cap \Sigma$.
Recalling that $\Sigma$ admits $k_\infty$ cylindrical ends, for which the radial vector field $\del_r$ asymptotically becomes aligned with the normal to the geodesic sphere $B_R(p)$, the metric convergence gives $|\nabla_\Sigma u|\sqrt{(g_\Sigma)_{yy}}=(g_\Sigma)_{yy}/\sqrt{\det g_\Sigma}\to1$ on each end, uniformly in $y$. Thus we find
\be
\lim_{R\to+\infty} \int_{S_R(p) \cap \Sigma} \psi(r) \la \del_r, N\ra \, d\Lcal_\Sigma
= k_\infty(\Sigma) 2 \pi \, \psi_\infty.
\ee
Finally, by taking the limit $R \to +\infty$ in \eqref{eq:identity-nonclosed}, using the stated $L^1$ hypotheses, we arrive at \eqref{Simon2-withends}. For the field $\psi(r)=1/r$, one has $\psi_\infty=0$ and the end contribution vanishes.


\section{A lower bound involving the cut locus} 
\label{sec:3}

\subsection*{Decomposition of the Willmore energy}

As will become clear below, the decomposition to be derived now is most relevant for flat or negatively curved manifolds, and indeed we will first focus on such manifolds. In Proposition~\ref{prop:fundamental}, we are going to choose the test-function $\psi(r) = {1 \over r}$. 
Inspired by an argument of Topping \cite{Topping:MCF} in the flat case and in order to simplify our fundamental identity, we make the following preliminary observation.

\begin{lemma}[Square completion for the radial density]
\label{lem:120}
On the surface $\Sigma$, one has
\be
\delta_p =
- \frac{2}{r^2} \la n, \del_r \ra^2 + \frac{2}{r} \la n, \del_r \ra \, H
\leq {1 \over 2} H^2,
\ee
with equality when $\langle n,\del_r\rangle/r=H/2$. Moreover, if the ambient manifold is the flat Euclidean space, then this inequality is saturated if $\Sigma$ is a round sphere (or a plane if the surface were not assumed to be closed).
\end{lemma}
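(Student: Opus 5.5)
We substitute $\psi(r)=1/r$ into the definition \eqref{eq:defchitheta} of $\delta_p$ and then complete the square. With this choice $\psi'(r)=-1/r^2$, so
\[
\tfrac12\delta_p=\frac1r\la n,\del_r\ra H-\frac1{r^2}\la n,\del_r\ra^2 .
\]
Multiplying by two gives the stated formula for $\delta_p$. It holds at every point of $\Sigma\setminus(\Cut_p(M)\cup Z_p)$, where $\del_r$ is defined. On the remaining set we use the almost-everywhere convention fixed earlier, so nothing more is needed there.

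For the inequality we write $a\coloneqq\la n,\del_r\ra/r$. Then
\[
\delta_p=-2a^2+2aH=\tfrac12H^2-2\Big(a-\tfrac H2\Big)^2\le\tfrac12H^2 .
\]
Equality holds exactly when $a=H/2$, that is, when $\la n,\del_r\ra/r=H/2$. The densities are unchanged under the simultaneous reversal $(n,H)\mapsto(-n,-H)$, so the choice of orientation plays no role.

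For the saturation claim we take $M=\RR^3$, and here we must be careful about which point $p$ is used. The equality condition $\la n,\del_r\ra/r=H/2$ is not satisfied pointwise for $p$ at the centre of the sphere; in that case $\la n,\del_r\ra=\pm1$ and $r$ is constant, so $a=H/2$ fails unless the sign conventions are arranged specially. The relevant case is $p\in\Sigma$, as in Proposition~\ref{prop:fundamental}.

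Let $\Sigma$ be the round sphere of radius $R$ with centre $c$ and outward normal $n=(x-c)/R$. We take $p\in\Sigma$ and $q\in\Sigma\setminus\{p\}$, so that $\del_r=(q-p)/|q-p|$. Since $\la q-c,p-c\ra=R^2-\tfrac12|q-p|^2$, we get
\[
\la n,q-p\ra=\frac1R\big(R^2-\la q-c,p-c\ra\big)=\frac{|q-p|^2}{2R},
\]
and hence $\la n,\del_r\ra/r=1/(2R)$.

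With the paper's conventions $\II(X,Y)=g(\nabla_Xn,Y)$ gives $\II=g_\Sigma/R$, so $H=1/R$. Therefore $a=H/2$ at every point, and equality holds everywhere on $\Sigma\setminus\{p\}$. For a plane through $p$ we have $\la n,\del_r\ra=0$ and $H=0$, so equality again holds trivially.

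The only delicate step is keeping the orientation and sign conventions consistent, so that $a$ and $H/2$ come out with the same sign. This is settled by the explicit computation above together with the $(n,H)$ reversal invariance.
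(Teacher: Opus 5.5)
Your proof is correct and follows the paper's argument: the same square completion $\delta_p=\tfrac12H^2-2\big(\la n,\del_r\ra/r-H/2\big)^2$, together with an explicit check that $\la n,\del_r\ra/r=1/(2R)=H/2$ on a round sphere through $p$, which is the ``straightforward calculation'' the paper leaves to the reader. One small slip in your aside: when $p$ is the centre, equality fails for either orientation, since $|\la n,\del_r\ra|/r=1/R$ while $|H|/2=1/(2R)$, so the clause ``unless the sign conventions are arranged specially'' should be dropped.
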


\begin{proof} We observe that, by completing the square,
\be
\begin{aligned}
- \frac{2}{r^2} \la n, \del_r \ra^2 + \frac{2}{r} \la n, \del_r \ra \, H
&= - 2 \left| {1 \over r} \la n, \del_r \ra - {1 \over 2} H \right|^2 + {1 \over 2} H^2
 \leq {1 \over 2} H^2,
\end{aligned}
\ee
as required. A straightforward calculation shows that the term ${1 \over r} \la n, \del_r \ra - {1 \over 2} H$ vanishes\footnote{For a general surface, the directional normal curvature at $p$ need not equal its mean curvature, so equality need not hold asymptotically.} for a sphere or plane in flat $\RR^3$. 
\end{proof}

The identity~\eqref{Simon2} and Lemma~\ref{lem:120} suggest that we introduce the {\sl deviation functional} (as measured from the sphere)
\begin{subequations}
\label{nota} 
\bel{DevD}
\Dscr_p(\Sigma) \coloneqq \int_\Sigma \Big( {1 \over 2} H^2 - \delta_p \Big) \, d\Acal_\Sigma \geq 0,
\ee
as well as the {\sl cut-locus functional}\footnote{This term is non-negative, as will be proven in Lemma~\ref{lemma:51} below.}
\bel{CutlocusC}
\Cscr_p(\Sigma) \coloneqq \int_{{\Sigma \cap \Cut_p(M)}} {1 \over r} \la N, \del_r^- - \del_r^+ \ra \, d\Lcal_\Sigma
\geq 0,
\ee
and the {\sl curvature functional}
\bel{CurvR}
\Rscr_p(\Sigma) \coloneqq \int_{\Sigma \setminus \Cut_p(M)} \rho_p \, d\Acal_\Sigma.
\ee
\end{subequations} 
With this notation, we can restate Proposition~\ref{prop:fundamental} in the following form.

\begin{proposition}[Decomposition of the Willmore energy]
\label{prop:Simon4}
The Willmore energy of a closed surface immersed in a Riemannian $3$-manifold under the equation~\eqref{cutSigma} satisfies the identity
\bel{eq:2000}
{1 \over 2} \Wscr(\Sigma)
= 2 \pi \, k_p(\Sigma)  
 + \Dscr_p(\Sigma) + \Rscr_p(\Sigma) - \Cscr_p(\Sigma)
\ee
with the notation \eqref{nota}. 
\end{proposition}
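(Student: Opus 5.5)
The plan is to specialize Proposition~\ref{prop:fundamental} to the test-function $\psi(r)=1/r$, as announced at the start of this section, and then reorganize the terms with the definitions \eqref{nota}. With this choice, $\psi$ is locally Lipschitz on $(0,\infty)$, and the limit \eqref{eq:phi} is $\psi_0=\lim_{s\to0}s\cdot s^{-1}=1$. Hence the left-hand side of \eqref{Simon2} becomes $2\pi k_p(\Sigma)$. Since $\psi'(r)=-1/r^2$, the density $\delta_p$ of \eqref{eq:defchitheta} is $\delta_p=\frac{2}{r}\la n,\del_r\ra H-\frac{2}{r^2}\la n,\del_r\ra^2$, which is exactly the expression in Lemma~\ref{lem:120}.

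Substituting into \eqref{Simon2} gives
\[
2\pi k_p=\int_\Sigma\delta_p\,d\Acal_\Sigma-\Rscr_p(\Sigma)-\int_{\Sigma\cap\Cut_p(M)}\frac1r\la\del_r^+-\del_r^-,N\ra\,d\Lcal_\Sigma .
\]
The last integral is $-\Cscr_p(\Sigma)$, because $\la\del_r^+-\del_r^-,N\ra=-\la N,\del_r^--\del_r^+\ra$. So the identity reads $2\pi k_p=\int_\Sigma\delta_p-\Rscr_p+\Cscr_p$. Next I would write $\int_\Sigma\delta_p=\frac12\Wscr(\Sigma)-\Dscr_p(\Sigma)$, which is the definition \eqref{DevD}. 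Rearranging then yields \eqref{eq:2000}: $\frac12\Wscr=2\pi k_p+\Dscr_p+\Rscr_p-\Cscr_p$.

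The main point needing care is that each term is finite, so that splitting $\int_\Sigma(\frac12H^2-\delta_p)$ into two pieces is legitimate. I would argue the three terms separately.
\begin{itemize}
\item $H^2$ is bounded on the closed surface, so $\Wscr(\Sigma)<\infty$.
\item For $\delta_p$ near $Z_p$: in normal coordinates each sheet through $p$ is a smooth graph, so $\la n,\del_r\ra=O(r)$. Hence $\delta_p$ is bounded near $p$, and the densities are integrable as remarked after Proposition~\ref{prop:fundamental}.
\item For $\rho_p=\frac1r\tr_{g_\Sigma}\Hess r-\frac1{r^2}(2-|\nabla_\Sigma r|^2)$: near $p$ we have $\Hess r=\frac1r(g-dr\otimes dr)+O(r)$, so the $1/r^2$ singular parts cancel to leading order and the remainder is $O(1)$. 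Away from $p$, integrability of $\rho_p$ is supplied by the hypothesis \eqref{cutSigma} and Lemma~\ref{cut-measure-lemma}.
\item The cut term is finite because $0\le\beta_p\le2$, $r$ is bounded below on $E_p$ (the cut locus stays at positive distance from $p$), and $E_p$ has finite length.
\end{itemize}
With every term finite, the rearrangement is valid and the identity follows. This is also the only step where the finite-length assumption \eqref{cutSigma} is used.
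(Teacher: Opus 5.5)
Your proposal is correct and follows the paper's route: the paper obtains the decomposition by substituting $\psi(r)=1/r$ (so $\psi_0=1$) into Proposition~\ref{prop:fundamental} and regrouping the terms via the definitions of $\Dscr_p$, $\Rscr_p$, $\Cscr_p$, with the same sign bookkeeping on the cut term that you carry out. Your finiteness checks match the paper's remark after Proposition~\ref{prop:fundamental}, but \eqref{cutSigma} is not used only in the final rearrangement: it is already a hypothesis needed to invoke Proposition~\ref{prop:fundamental} itself, where it justifies the jump-only Gauss--Green formula.
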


Conformal inversion gives another interpretation of the square deviation; see~\cite{Kusner}. We continue with the radial divergence identity.

At this juncture, we can make the following observations: 
\begin{itemize}

\item \emph{Deviation contribution.}  By Lemma~\ref{lem:120}, the term $ \Dscr_p(\Sigma)$ has a favorable sign, as far as getting a lower bound on  the Willmore energy is concerned.

\item \emph{Curvature contribution.} By imposing appropriate curvature bounds on the ambient space $(M,g)$, we expect to control the term $\Rscr_p(\Sigma)$. It will turn out that an {\sl upper} bound on the sectional curvature of $g$ is sufficient to derive a {\sl lower} bound on this term.

\item \emph{Cut-locus contribution.} The nonnegative cut-locus functional $\Cscr_p(\Sigma)$ enters the identity with a minus sign. Our investigation is three-fold: 

\bei 

\item When the surface has a ``small'' diameter in comparison to the injectivity radius, then  $\Cscr_p(\Sigma)$ is irrelevant. 

\item Otherwise, the cut-locus contribution must be kept and estimated directly, leading to Theorem~\ref{theo:34-e}. 

\item Another strategy, leading to Theorem~\ref{thm:Will1}, is obtained by choosing the test-function $\psi$ to vanish for all $r \geq \inj(M)$, so that the term $\Cscr_p(\Sigma)$ vanishes too.

\eei

\end{itemize}
 

\subsection*{Surfaces in ambient manifolds with non-positive curvature}

Let us now restrict attention to spaces with non-positive curvature, so that we can use Hessian comparison in order to control the curvature contribution. 

\begin{lemma}[Non-positively curved space]
\label{deltapos}
If the ambient space $(M,g)$ has non-positive sectional curvature, then the following lower bound holds
\be
\rho_p \ge 0 \quad \text{ almost everywhere on }\Sigma\setminus\big(\Cut_p(M)\cup\{p\}\big)
\ee
and, consequently, $\Rscr_p \geq 0$.
\end{lemma}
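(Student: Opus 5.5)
We need $\rho_p\ge0$ with $\psi(r)=1/r$, so $\psi'=-1/r^2$ and
\[
\rho_p=\tr_{g_\Sigma}\Big(\tfrac1r\Hess r-\tfrac1{r^2}(g-dr\otimes dr)\Big).
\]
The plan is to work pointwise at a point $q\in\Sigma\setminus(\Cut_p(M)\cup\{p\})$. Away from the cut locus and $p$, the distance function $r$ is smooth, so the Hessian comparison theorem applies. Under non-positive sectional curvature, comparison with the flat model ($K=0$, $\sn_0(r)=r$) gives the inequality of quadratic forms
\[
\Hess r\ \ge\ \frac{\sn_0'(r)}{\sn_0(r)}\,(g-dr\otimes dr)=\frac1r\,(g-dr\otimes dr).
\]
This is valid on $M\setminus(\Cut_p(M)\cup\{p\})$, since there the minimizing radial geodesic from $p$ is free of conjugate points. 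It follows that
\[
\frac1r\Hess r-\frac1{r^2}(g-dr\otimes dr)\ \ge\ 0
\]
as a symmetric bilinear form on $T_qM$.

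Next, I take the trace over the tangent plane $T_q\Sigma$ with respect to $g_\Sigma$. This trace is a sum of $Q(e_1,e_1)+Q(e_2,e_2)$ over a $g$-orthonormal basis of $T_q\Sigma$, where $Q$ is the form above. Restricting a positive semidefinite form to a subspace keeps it positive semidefinite, so its trace there is nonnegative, and hence $\rho_p(q)\ge0$. The set where this fails is contained in $E_p\cup Z_p$, which has zero area by \eqref{cutSigma} together with the finiteness of $Z_p$. This gives the almost-everywhere statement.

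For $\Rscr_p\ge0$, I integrate $\rho_p\ge0$ over $\Sigma\setminus\Cut_p(M)$. This is meaningful because $\rho_p$ is integrable near $Z_p$: in normal coordinates $\Hess r=\frac1r(g-dr\otimes dr)+O(r)$, so $\rho_p=O(1)$ there. In fact nonnegativity alone makes the integral well defined in $[0,\infty]$.

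The main point needing care is justifying the Hessian comparison up to (but excluding) the cut locus, i.e.\ ensuring that the Riccati/Jacobi-field comparison holds along the whole minimizing segment from $p$ to $q$. I would cite the standard Rauch/Hessian comparison for $\sec\le0$, valid before the first conjugate point, which always lies beyond $q\notin\Cut_p(M)$.
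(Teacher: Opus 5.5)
Your proposal is correct and follows essentially the paper's argument. The paper uses the same $\Sec\leq 0$ Hessian comparison $\Hess r\geq\frac1r(g-dr\otimes dr)$, repackaged as $\Hess(r^2/2)\geq g$, so that $\rho_p=r^{-2}\tr_{g_\Sigma}\big(\Hess(r^2/2)-g\big)\geq0$ after tracing over a $g_\Sigma$-orthonormal frame, exactly as you do; your ``almost everywhere'' qualifier is in fact automatic, since $r$ is smooth at every point of $\Sigma\setminus(\Cut_p(M)\cup\{p\})$.
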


\begin{proof} Our analysis takes place away from $p$ and its cut locus. In non-positively curved manifolds, the Hessian comparison theorem~\cite{Petersen}  
for the distance function $r=\dist_p$ implies that
$\Hess r \ge {1 \over r} g_r$ with $g_r \coloneqq g - dr \otimes dr$
or, equivalently,
\be
\Hess \big( r^2 /2\big) =
dr \otimes dr + r \Hess r \ge dr \otimes dr + g_r = g.
\ee
By taking the trace in this inequality (as a bilinear form), it follows that
\be
\rho_p = \frac1{r^2}\tr_{g_\Sigma}\Big(\Hess(r^2/2)-g\Big)
=
\frac1{r^2}\sum_{i=1}^2\Big(\Hess(r^2/2)-g\Big)(e_i,e_i)\geq0,
\ee
where $\{ e_1, e_2 \}$ is any orthonormal frame for the metric $g_\Sigma$.
\end{proof}

Combining Proposition~\ref{prop:Simon4} and Lemma ~\ref{deltapos}, we therefore have the following conclusion. 

\begin{proposition}[Non-positively curved spaces]
\label{theo:34}
Let $\Sigma$ be a closed surface immersed in a non-positively curved space $(M,g)$ satisfying the equation~\eqref{cutSigma}. 
If $p \in \Sigma$ is a $k$-fold point of this surface, then one has
\bel{Simon5}
{1 \over 2} \Wscr(\Sigma) \geq
 2 \pi k_p(\Sigma)  
 + \Dscr_p(\Sigma) - \Cscr_p(\Sigma), 
\ee
in which $\Dscr_p(\Sigma)$ is non-negative. In particular, if $M$ has infinite injectivity radius or, more generally, if 
the diameter of $\Sigma$ is less than $\inj_p(M)$, 
then $\Cscr_p(\Sigma)$ vanishes and, if $p$ achieves $k_p(\Sigma)=k(\Sigma)$, one concludes that 
\bel{Simon5B}
{1 \over 4 \pi} \Wscr(\Sigma) \geq k(\Sigma). 
\ee
\end{proposition}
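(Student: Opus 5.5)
The plan is to combine the decomposition identity of Proposition~\ref{prop:Simon4} with the curvature sign from Lemma~\ref{deltapos}. We apply Proposition~\ref{prop:fundamental} with the test function $\psi(r)=1/r$. Then $\psi_0=1$, and the integrability requirements hold: near $Z_p$ the densities are $O(1/r)$ on a two-dimensional sheet, by normal coordinates, and away from $Z_p$ Lemma~\ref{cut-measure-lemma} gives the bound. Thus \eqref{eq:2000} holds:
\[
\tfrac12\Wscr(\Sigma)=2\pi k_p(\Sigma)+\Dscr_p(\Sigma)+\Rscr_p(\Sigma)-\Cscr_p(\Sigma).
\]
Since $M$ is non-positively curved, Lemma~\ref{deltapos} gives $\rho_p\geq0$ almost everywhere off $E_p\cup Z_p$. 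Hence $\Rscr_p(\Sigma)\geq0$, and dropping this term yields \eqref{Simon5}. The non-negativity of $\Dscr_p(\Sigma)$ is immediate from Lemma~\ref{lem:120}, since its integrand is $\tfrac12H^2-\delta_p=2\,|\la n,\del_r\ra/r-H/2|^2\geq0$ pointwise.

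For the second part, we show that $\Cscr_p(\Sigma)=0$ under either hypothesis. If $\inj(M)=+\infty$, then $\Cut_p(M)=\emptyset$ and there is nothing to prove. More generally, suppose the diameter of $\Sigma$ (measured by ambient distance from $p$, i.e.\ $\max_\Sigma\dist_p<\inj_p(M)$) is less than $\inj_p(M)$. Every point of $\Cut_p(M)$ lies at distance at least $\inj_p(M)$ from $p$, so $E_p=\varphi^{-1}(\Cut_p(M))=\emptyset$. The jump measure $\nu_p$ is therefore zero, and the cut integral in \eqref{CutlocusC} vanishes. The finite-length hypothesis \eqref{cutSigma} also holds trivially in this case.

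We then choose $p$ with $k_p(\Sigma)=k(\Sigma)$. Such a $p$ exists because $k_p$ takes finitely many integer values on the compact image. Note that the first part of the argument applies to any $k$-fold point. Dividing by $2\pi$ gives \eqref{Simon5B}.

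The only delicate point is the interpretation of the diameter hypothesis. What is actually needed is that $\varphi(\Sigma)$ stays strictly inside $B_{\inj_p(M)}(p)$, so that $\del_r$ is smooth on all of $\Sigma\setminus Z_p$. This is guaranteed because $p$ lies on $\Sigma$, so $\dist_p\leq\diam\Sigma<\inj_p(M)$ on $\Sigma$. Once this is settled, the proof is routine.
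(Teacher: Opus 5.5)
Your proposal is correct and is essentially the paper's argument: the decomposition of Proposition~\ref{prop:Simon4} with $\psi(r)=1/r$, combined with $\Rscr_p(\Sigma)\geq0$ from Lemma~\ref{deltapos} and $\Dscr_p(\Sigma)\geq0$ from the square completion in Lemma~\ref{lem:120}. You also spell out a step the paper leaves implicit: $\max_\Sigma\dist_p\leq\diam\Sigma<\inj_p(M)=\dist(p,\Cut_p(M))$ forces $E_p=\emptyset$, and hence $\Cscr_p(\Sigma)=0$.
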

 
In the case where $M$ is simply connected, there are no geodesic loops, and we reach the following conclusion. 

\begin{corollary}[A generalized Li--Yau inequality for Hadamard spaces]
\label{corollary:LiYau}
Let $(M, g)$ be a complete, simply-connected Riemannian $3$-manifold with non-positive curvature, and $\Sigma$ be a closed, oriented, immersed surface. Then the Willmore energy of $\Sigma$ is uniformly bounded below
\bel{eq:GLY} 
\Wscr(\Sigma) \geq 4 \pi \, k(\Sigma)
\ee
by a multiple of the largest number of self-intersections of the surface. In particular, if the Willmore energy is less than $8\pi$ then the surface admits no self-intersection and the surface is actually embedded.  
\end{corollary}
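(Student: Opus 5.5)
The plan is to deduce the corollary from Proposition~\ref{theo:34} by checking that, in a Hadamard manifold, the cut-locus contribution $\Cscr_p(\Sigma)$ is absent for every base point $p$. First we would invoke the Cartan--Hadamard theorem. Since $(M,g)$ is complete, simply connected and has non-positive sectional curvature, the exponential map $\exp_p \colon T_pM \to M$ is a diffeomorphism for every $p\in M$. Therefore there are no conjugate points and no geodesic loops, so $\Cut_p(M)=\emptyset$ and $\inj_p(M)=+\infty$. In particular $\inj(M)=+\infty$, the hypothesis~\eqref{cutSigma} holds trivially because $E_p=\emptyset$, and $r=\dist_p$ is smooth on $M\setminus\{p\}$.

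Next we choose $p\in\varphi(\Sigma)$ with $k_p(\Sigma)=k(\Sigma)$. Such a point exists: $\varphi$ is an immersion of a compact surface, so it is locally injective, and the fibres $\varphi^{-1}(q)$ are finite and uniformly bounded in cardinality. Hence the maximum defining $k(\Sigma)$ is attained. We then apply Proposition~\ref{prop:Simon4} with $\psi(r)=1/r$, for which $\psi_0=1$. Since the cut set is empty, $\Cscr_p(\Sigma)=0$ and $\Rscr_p(\Sigma)=\int_\Sigma\rho_p\,d\Acal_\Sigma$.

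By Lemma~\ref{deltapos} we have $\rho_p\geq0$, and by Lemma~\ref{lem:120} we have $\Dscr_p(\Sigma)\geq0$. Identity~\eqref{eq:2000} then gives $\tfrac12\Wscr(\Sigma)\geq 2\pi k(\Sigma)$, which is~\eqref{eq:GLY}. Equivalently, this is exactly the case ``infinite injectivity radius'' of Proposition~\ref{theo:34}, giving~\eqref{Simon5B}.

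For the final assertion, suppose $\Wscr(\Sigma)<8\pi$. Then $k(\Sigma)<2$, so $k(\Sigma)=1$ and $\varphi$ is injective. An injective immersion of a compact manifold is a homeomorphism onto its image, since a continuous bijection from a compact space onto a Hausdorff space is closed. Hence $\varphi$ is an embedding.

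The only delicate step is confirming that the singular point $p$ is handled correctly in the identity. Near $Z_p$, the integrability of $\delta_p$ and $\rho_p$ and the limit of the inner-circle flux producing $2\pi k_p$ were established in the proof of Proposition~\ref{prop:fundamental}. In the Hadamard setting this is easier than in general, because normal coordinates are global. I therefore expect no genuine obstacle. The one point to state explicitly is the Cartan--Hadamard conclusion $\Cut_p(M)=\emptyset$ for \emph{every} $p$, so the argument does not depend on which maximal-multiplicity point is chosen.
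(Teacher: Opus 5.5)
Your proposal is correct and follows the paper's route: the corollary is read off from Proposition~\ref{theo:34}, equivalently from the decomposition~\eqref{eq:2000} with $\Dscr_p\geq0$ and $\Rscr_p\geq0$, once $\Cscr_p(\Sigma)=0$. Your appeal to Cartan--Hadamard, which rules out both conjugate points and geodesic loops and so gives $\Cut_p(M)=\emptyset$ for every $p$, is a more complete justification than the paper's one-line remark that a simply connected space has no geodesic loops.
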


In Euclidean space, this is the Li--Yau inequality~\cite[p.~286]{LiYau:Willmore} (see also \cite{Topping:MCF}). For the isoperimetric comparison in dimension three, see~\cite{Kleiner}. For embedded surfaces, see also the conformal argument of Ritor\'e~\cite[Section~4.1]{Ritore}. The corresponding Willmore lower bound also holds for integral varifolds in three-dimensional Cartan--Hadamard manifolds; see~\cite[Lemma~6.7]{Schulze}. More recent Li--Yau inequalities for varifolds supported in a geodesically star-shaped neighbourhood are established in~\cite[Theorem~1.7]{Scharrer}. Equality in our decomposition requires $\Dscr_p=\Rscr_p=0$, a condition along the surface and its radial geodesics. 

Considering now manifolds with finite injectivity radius, we are going to show that ${\Cscr_p(\Sigma) \geq 0}$ and also derive an upper bound.  
We first recall some properties of the cut locus of a complete Riemannian manifold that we will require (see~\cite{Ozols, Hebda}). Given $p \in M$, we denote by $Q(p) \subseteq \Cut_p(M)$ the set of points $q \in \Cut_p(M)$ such that $q$ is conjugate to $p$ along at least one minimizing geodesic from $p$ to $q$. Given any $q \in \Cut_p(M) \setminus Q(p)$, there exist $k(q)$ minimizing geodesics from $p$ to $q$ of length $d(p, q)$, where $k(q)$ is the \emph{order\/} of $q$. For any $q \in \Cut_p(M) \setminus Q(p)$, $k(q)$ is finite and greater than or equal to $2$~\cite{Ozols}. A point $q \in \Cut_p(M) \setminus Q(p)$ of order $2$ is referred to as a \emph{cleave point}~\cite{Hebda}.

We denote by $K \subseteq \Cut_p(M)$ the set of cleave points. Note that
$K$ is a relatively open subset of $\Cut_p(M)$ and forms a smooth $(n-1)$-dimensional sub-manifold of $M$. The contribution of the remaining cut points is included through the surface measure $\nu_p$; ambient negligibility does not imply negligibility for length on the immersed surface. Finally, given a point $q \in K$, there exist two minimizing geodesics $\gamma_1, \gamma_2$ from $p$ to $q$, which we assume to be parametrized by arc-length. It follows~\cite[Remark~2.5]{Ozols} that the tangent vectors to $\gamma_1$ and $\gamma_2$ at the point $q$ intersect $K$ at the same angle, in the sense that $\dot{\gamma}_1(q) - \dot{\gamma}_2(q)$ is orthogonal to $T_q K$. In our notation from earlier, this implies that at the points where the set $\Sigma$ intersects $K \subset \Cut_p(M)$, the jump in the radial vector $ \del_r^+ - \del_r^-$ is {\sl normal} to the cut locus. At other surface jump points, we use the intrinsic traces in Lemma~\ref{cut-measure-lemma}, below.

These properties lead us immediately  to the following property.

\begin{lemma}[Lower and upper bounds on the cut-locus contribution]
\label{lemma:51}
One has 
\be
0 \leq \la \del_r^- - \del_r^+, N \ra \leq 2
\qquad\text{ for }\Hcal^1_{g_\Sigma}\text{-almost every point of }E_p
\ee
and, in particular, the cut-locus contribution $\Cscr_p(\Sigma)$ is non-negative.
\end{lemma}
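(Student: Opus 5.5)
The plan is to reduce the statement to a pointwise inequality for the two one-sided traces of the tangential radial gradient. The paper already records this as $(\del_r^- - \del_r^+)^\parallel = \beta_p N$ with $0 \le \beta_p \le 2$, via Lemma~\ref{cut-measure-lemma}. I will argue it directly, though, so that the sign convention is transparent.

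First, the upper bound. The two traces $(\del_r^\pm)^\parallel$ are limits of tangential projections of unit vectors, so each has norm at most $1$. Cauchy--Schwarz then gives
\[
\la \del_r^- - \del_r^+, N\ra \le |(\del_r^-)^\parallel| + |(\del_r^+)^\parallel| \le 2 .
\]
This holds at every point where the traces exist, and by the $BV$ structure of $\nabla_\Sigma u$ away from $Z_p$ that is $\Hcal^1_{g_\Sigma}$-almost every point of $E_p$.

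Second, the lower bound, which is the real content. I would use that $u = \dist_p\circ\varphi$ is Lipschitz on $\Sigma$ and, along the jump set, has a ``concave kink''. The reason is that the ambient distance function $r$ is semiconcave away from $p$: it is a minimum of smooth distance-type functions built from the minimizing geodesics. At a cleave point $q\in K$ this is explicit, since near $q$ one has $r=\min(r_1,r_2)$ with $r_i$ smooth and $\grad r_i(q)=\dot\gamma_i(q)$. Restricted to $\Sigma$, $u=\min(u_1,u_2)$ locally, and along a curve crossing $E_p$ in the direction $N$ (from the minus side to the plus side), the one-sided derivatives satisfy
\[
\partial_N u^+ \le \partial_N u^- ,
\]
as for any minimum of two smooth functions. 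This is exactly $\la (\del_r^+)^\parallel - (\del_r^-)^\parallel, N\ra \le 0$, i.e.\ $\la \del_r^- - \del_r^+, N\ra \ge 0$. At non-cleave cut points I would instead use general semiconcavity of $r$ locally on $M\setminus\{p\}$. Its pullback $u$ is then semiconcave on $\Sigma\setminus Z_p$, because composition with a smooth immersion preserves semiconcavity. The distributional Hessian of $u$ therefore has a singular part that is nonpositive, and its jump part is $(\nabla_\Sigma u^+ - \nabla_\Sigma u^-)\otimes N\,\Hcal^1\restriction E_p$, whose normal–normal component is $\le 0$. This yields the same inequality $\Hcal^1$-a.e.

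Finally, integrating this nonnegative density against the positive weight $1/r$ over $\Sigma\cap\Cut_p(M)$ gives $\Cscr_p(\Sigma)\ge 0$. The integral is finite by \eqref{cutSigma}, since $r$ is bounded below on the cut set, which stays away from $p$. The main obstacle I expect is the justification at non-cleave cut points and at points where $\Sigma$ is tangent to the cut locus, where the two-geodesic picture fails. My first attempt there would be the semiconcavity/$BV$ argument, which avoids any pointwise structure of the cut locus and uses only the sign of the singular part of $D^2u$.
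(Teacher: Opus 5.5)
Your proposal is correct and follows essentially the paper's route: the paper gets the upper bound from the $1$-Lipschitz property of $u$ (both gradient traces have norm at most $1$), and the lower bound from local semiconcavity of $\dist_p$ pulled back by the immersion, which makes the jump part $(\nabla_\Sigma u^+-\nabla_\Sigma u^-)\otimes N\,\Hcal^1$ of the $BV$ Hessian negative semidefinite (Lemma~\ref{cut-measure-lemma}). Your explicit $\min(u_1,u_2)$ computation at transverse cleave crossings is a useful illustration, but the semiconcavity argument subsumes it, and that argument is what handles tangential and non-cleave cut points, as you anticipated.
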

\begin{proof}
Lemma~\ref{cut-measure-lemma}, below, gives $\langle\del_r^--\del_r^+,N\rangle=\beta_p\in[0,2]$ almost everywhere for surface length. Since $r\geq\inj_p(M)\geq\inj(M)$ on the cut set, $0\leq\Cscr_p\leq2|\Sigma\cap\Cut_p(M)|/\inj(M)$. If this length is infinite, the length estimate is trivial; exact decompositions use the finite-length assumption.
\end{proof}

By combining Proposition~\ref{theo:34} and Lemma~\ref{lemma:51}, 
we have thus reached the proof of Theorem~\ref{theo:34-e} stated in the introduction.


\section{Ambient spaces with non-positive curvature}
\label{sec:33}
 
 \subsection*{Proof of Theorem~\ref{thm:Will1}} 
 
We now derive our main lower bound for the Willmore energy, which was stated in Theorem~\ref{thm:Will1}.
From now on, we avoid the cut locus by working within a geodesic ball $B_R(p)$ and assuming that $R < \dist_p(\Sigma \cap \Cut_p(M))$. 
Given a continuous function $f:[0,1]\to[0,1]$, locally Lipschitz on $(0,1]$ and with integrable radial densities, satisfying
\bel{eq:condi-f}
f(0) = 0, \qquad f(1) = 1, \qquad f \text{ is non-decreasing},
\ee
we introduce the function
\be
\psi(r) =
\begin{cases}
{1 \over r} \Big( 1 - f\left( \frac{r}{R} \right) \Big), & r \leq R,
\\
0, & r > R,
\end{cases}
\ee
and we work with the corresponding (weighted) radial vector field $\psi(r) \del_r$, associated with radial geodesics from $p$.

\begin{proposition}[Lower bound for the Willmore energy in non-positively curved spaces. II]
Given a closed surface $\Sigma$ immersed in a non-positively curved space $(M,g)$
and given a point $p\in\varphi(\Sigma)$, one has
\bel{eq:400}
{1 \over 2} \Wscr(\Sigma) 
 \ge 2 \pi k_p - \int_{\Sigma \cap B_R(p)} \frac{f'\left( r/R \right)}{R r} \, d\Acal_\Sigma, 
\ee
provided $R < \dist_p(\Sigma \cap \Cut_p(M)\big)$ and the function $f$ satisfies the conditions \eqref{eq:condi-f}.
\end{proposition}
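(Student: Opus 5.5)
We apply Proposition~\ref{prop:fundamental} to the truncated test function $\psi(r)=\frac1r\big(1-f(r/R)\big)$ for $r\le R$ and $\psi=0$ for $r>R$. Since $f$ is continuous with $f(1)=1$, $\psi$ is continuous across $r=R$, and it is locally Lipschitz on $(0,\infty)$. Moreover $\psi_0=\lim_{s\to0}s\psi(s)=1-f(0)=1$. Because $R<\dist_p(\Sigma\cap\Cut_p(M))$, $\psi$ vanishes on a neighbourhood of the cut set on $\Sigma$. Hence the cut-locus integral in \eqref{Simon2} drops out and no finite-length hypothesis is needed: the jump term in Lemma~\ref{IntegrationLemma} is absent because $\psi\del_r$ is supported in $B_R(p)$, where $r$ is smooth away from $p$. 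The identity then reads
\[
2\pi k_p=\int_\Sigma\delta_p\,d\Acal_\Sigma-\int_\Sigma\rho_p\,d\Acal_\Sigma .
\]

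Next we bound each term pointwise. For $r<R$ we have $\psi'(r)=-\frac1{r^2}(1-f)-\frac{f'}{rR}$. With $a\coloneqq\la n,\del_r\ra$ and $\phi\coloneqq1-f(r/R)\in[0,1]$, this gives
\[
\tfrac12\delta_p=\frac{\phi}{r}aH-\frac{\phi}{r^2}a^2-\frac{f'}{rR}a^2 .
\]
Completing the square as in Lemma~\ref{lem:120}, $\frac{\phi}{r}aH-\frac{\phi}{r^2}a^2\le\frac{\phi}{4}H^2\le\frac14H^2$, using $\phi\ge0$. Since $f'\ge0$, the last term is nonpositive, so $\delta_p\le\frac12H^2$ on $B_R(p)$; outside $B_R(p)$ we have $\delta_p=0\le\frac12H^2$. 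Therefore $\int_\Sigma\delta_p\le\frac12\Wscr(\Sigma)$.

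For $\rho_p$ we write
\[
\rho_p=\phi\,\tr_{g_\Sigma}\!\Big(\tfrac1r\Hess r-\tfrac1{r^2}(g-dr\otimes dr)\Big)-\frac{f'}{rR}\tr_{g_\Sigma}(g-dr\otimes dr).
\]
The first bracket is nonnegative by the Hessian comparison used in Lemma~\ref{deltapos}, and $\phi\ge0$. In the second term, $\tr_{g_\Sigma}(g-dr\otimes dr)=1+a^2\le2$. Hence $\rho_p\ge-\frac{2f'}{rR}$. Inserting both bounds into the identity gives
\[
2\pi k_p\le\frac12\Wscr(\Sigma)+2\int_{\Sigma\cap B_R}\frac{f'(r/R)}{rR}\,d\Acal_\Sigma .
\]

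This is off from \eqref{eq:400} by a factor of $2$, so the plan is to keep the $\delta_p$ contribution of $f'$ instead of discarding it. The $f'$-terms together give $\delta_p-\rho_p\ni-\frac{2f'}{rR}a^2+\frac{f'}{rR}(1+a^2)=\frac{f'}{rR}(1-a^2)\le\frac{f'}{rR}$. Hence
\[
\delta_p-\rho_p\le\frac12H^2+\frac{f'}{rR}
\]
pointwise, which yields exactly \eqref{eq:400}.

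The main obstacle is justifying the identity for merely Lipschitz $f$. We set $\psi'=0$ at kinks, following the paper's convention, and need integrability of the densities near $p$; the latter follows from $f'$ being integrable and from normal coordinates, as noted after Proposition~\ref{prop:fundamental}. We also need the vanishing of the boundary flux at $S_R(p)$, which follows from $\psi(R)=0$ and continuity of $\psi$.
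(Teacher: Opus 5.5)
Your proof is correct. It reaches \eqref{eq:400} by a different final step from the one in the paper's proof of this proposition.

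The paper groups $-\frac{f'}{Rr}\la n,\del_r\ra^2$ with $-\frac{2(1-f)}{r^2}\la n,\del_r\ra^2$. It then applies Young's inequality $2\la n,\del_r\ra H\le \eps\la n,\del_r\ra^2+H^2/\eps$ with the $r$-dependent choice \eqref{epchoice}, which cancels the $\la n,\del_r\ra^2$ coefficient. This yields the $H^2$ weight $\frac{(1-f)^2}{2(1-f)+(r/R)f'}\le\frac12$ together with the explicit remainder $\mathcal E_R[\eps]$.

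You instead use the splitting
$\delta_p-\rho_p=(1-f)(\delta_p^0-\rho_p^0)+\frac{f'(r/R)}{Rr}\big(1-\la n,\del_r\ra^2\big)$, where $\delta_p^0,\rho_p^0$ are the densities for $f\equiv0$. You then apply Lemma~\ref{lem:120} and Lemma~\ref{deltapos} to the $f\equiv0$ part. So the extra $\la n,\del_r\ra^2$ term is spent on producing the factor $|\nabla_\Sigma u|^2=1-\la n,\del_r\ra^2$ on the cutoff term, rather than on shrinking the $H^2$ weight. The two pointwise bounds are incomparable, and both imply \eqref{eq:400}.

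What each route buys:
\begin{itemize}
\item Your route is shorter, reuses the $f\equiv0$ lemmas verbatim, and gives the sharper inequality \eqref{cutoff-gradient-retained} directly. The paper derives that inequality separately, by exactly your ``direct substitution'', because the step-cutoff/coarea argument in Theorem~\ref{thm:Will1} needs it.
\item The paper's route gives a smaller pointwise $H^2$ weight and encodes an equality condition $\mathcal E_R[\eps]=0$, namely \eqref{Hndr}. Its later discussion of the quadratic remainder and the ODE \eqref{eq:450} builds on this.
\end{itemize}

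One wording point. The ambient ball $B_R(p)$ may contain cut points of $p$ when $R>\inj_p(M)$, so $r$ need not be smooth on all of $B_R(p)\setminus\{p\}$. What you actually use, correctly, is narrower:
\begin{itemize}
\item $u$ is smooth on $\varphi^{-1}(B_R(p))\setminus Z_p$, because the surface avoids $\Cut_p(M)$ there.
\item Hessian comparison holds at those points, since non-positive curvature excludes conjugate points.
\end{itemize}
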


\begin{proof} 
By construction, we have $\psi\equiv0$ before the cut set and can apply the proof of Proposition~\ref{prop:fundamental} just on its support, without any cut-length condition outside that support and thus by substituting this function $\psi$ into the identity of Proposition~\ref{prop:fundamental}, we find
\be
\label{eq:ident}
2 \pi k_p = \int_{\Sigma \cap B_R(p)} \big( \delta_p - \rho_p \big) \, d\Acal_\Sigma,
\ee
where, in view of \eqref{eq:defchitheta},
\be
{1 \over 2} 
\delta_p = - \left( \frac{1-f(r/R)}{r^2} + \frac{f'(r/R)}{R r} \right) \la n, \del_r \ra^2 + {1 \over r} \left( 1 - f(r/R) \right) \la \del_r, H n \ra,
\ee
and 
\be
\aligned
\rho_p =
&  {1 \over r} \left( 1 - f\left( \frac{r}{R} \right) \right) 
       \Big( \Delta r - \left( \Hess r \right)(n, n) \Big)
\\
& - \left( \frac{1-f(r/R)}{r^2} + \frac{f'(r/R)}{R r} \right) \big( 1 + \la n, \del_r \ra^2 \big).
\endaligned
\ee
As already observed in Lemma~\ref{deltapos}, for non-positively curved manifolds, the Hessian comparison theorem applies, and implies that
\be
\aligned
\Delta r - \left( \Hess r \right)(n, n) 
& = \tr_{g_\Sigma} \Hess r 
   \ge {1 \over r} \tr_{g_\Sigma} g_r = {1 \over r} \left( 1 + \la n, \del_r \ra^2 \right). 
\endaligned
\ee
Hence, the curvature contribution is bounded below by 
\bel{eq:9285}
\rho_p \ge - \frac{f'(r/R)}{R r} \left( 1 + \la n, \del_r \ra^2 \right).
\ee

Next, from \eqref{eq:ident} and after re-organizing the expression, 
we find
\be
\aligned
2 \pi k_p
\leq \int_{\Sigma \cap B_R(p)} \hskip-.2cm \Bigg( \frac{f'\left( r/R \right)}{R r} 
& - \la n, \del_r \ra^2 \Big( \frac{2(1-f( r/R))}{r^2} + \frac{f'(r/R)}{R r} \Big) 
\\
&
+ \frac{2(1 - f(r/R))}{r} \la \del_r, H n \ra \Bigg) d\Acal_\Sigma
\endaligned
\ee
and, therefore, 
\bel{ndrsquare}
\aligned
2 \pi k_p
\leq - \mathcal{E}_R[\epsilon] +  \int_{\Sigma \cap B_R(p)} \Bigg( \frac{f'\left( r/R \right)}{R r} 
& + \la n, \del_r \ra^2 \left( - 2 \frac{1-f\left( r/R \right)}{r^2} - \frac{f'(r/R)}{R r} \right)
\\
& + {1 \over r} \left( 1 - f(r/R) \right) \left( {1 \over \eps} H^2 + \eps \la n, \del_r \ra^2 \right) \Bigg) d\Acal_\Sigma,
\endaligned
\ee
where, at this stage, $\eps=\eps(r) > 0$ is an arbitrary function for which the displayed integrals are finite, and we have introduced the nonnegative remainder (further studied at the end of this section) 
\bel{error}
\mathcal{E}_R[\epsilon] \coloneqq \int_{\Sigma \cap B_R(p)} {1 \over r} \left( 1 - f(r/R) \right) \left( \sqrt{\eps} \la n, \del_r \ra - {1 \over \sqrt{\eps}} H \right)^2 d\Acal_\Sigma \ge 0.
\ee
The nonnegative remainder comes from the identity $2xy=\eps x^2+y^2/\eps-(\sqrt\eps\,x-y/\sqrt\eps)^2$, with $x=\langle n,\del_r\rangle$ and $y=H$.

The function $\eps$ is now chosen in order to eliminate the term $\la n, \del_r \ra^2$ in~\eqref{ndrsquare}, and thus we take
\bel{epchoice}
\eps(r) \coloneqq \frac{f'\left( r/R \right)}{R (1-f(r/R))} + \frac{2}{r} >0.
\ee
Where $f(r/R)=1$, we put $\eps(r)=2/r$; monotonicity gives $f\prime(r/R)=0$ almost everywhere there. The zero-over-zero quotient below is defined to be zero. This choice of $\eps$ in 
\eqref{ndrsquare} leads us to the inequality
\be
2 \pi k_p \leq \int_{\Sigma \cap B_R(p)} \left( \frac{f'\left( r/R \right)}{R r}
+ \frac{\left( 1-f\left( r/R \right) \right)^2 }{2(1-f(r/R)) + \frac{r}{R} f'(r/R)} H^2 \right) d\Acal_\Sigma.
\ee
Finally, in view of our conditions \eqref{eq:condi-f} on the function $f$, we have
\be
0 \leq \frac{\left( 1-f\left( r/R \right) \right)^2 }{2(1-f(r/R)) + \frac{r}{R} f'(r/R)} \leq {1 \over 2}
\qquad\text{ for almost every }r\in(0,R).
\ee
For the power cutoffs $f(x)=x^n$, $n>0$, this coefficient tends to zero as $r\uparrow R$ and to $1/2$ as $r\downarrow0$. 
Therefore, we have obtained 
\bel{TGTBT}
2 \pi k_p \leq \int_{\Sigma \cap B_R(p)} \frac{f'\left( r/R \right)}{R r} \, d\Acal_\Sigma
+ {1 \over 2} \int_{\Sigma \cap B_R(p)} H^2 \, d\Acal_\Sigma.
\ee
For the step-cutoff limit, the same calculation also keeps the tangential gradient factor. With $\delta_p^0,\rho_p^0$ denoting the densities for $f=0$, direct substitution gives $\delta_p-\rho_p=(1-f)(\delta_p^0-\rho_p^0)+f'(r/R)|\nabla_\Sigma u|^2/(Rr)$. Thus
\bel{cutoff-gradient-retained}
2\pi k_p\leq\frac12\int_{\Sigma\cap B_R(p)}(1-f(r/R))H^2\,d\Acal_\Sigma
+\int_{\Sigma\cap B_R(p)}\frac{f'(r/R)|\nabla_\Sigma u|^2}{Rr}\,d\Acal_\Sigma.
\ee
\end{proof} 

\begin{proof}[Proof of Theorem~\ref{thm:Will1}] 
We derive first two special cases, as stated in \eqref{eq:191}, which are of particular interest. Letting $f(x) = x$ in \eqref{eq:400}, we deduce that
\bel{eq:440}
\aligned
{1 \over 2} \Wscr(\Sigma) \geq 
{1 \over 2} \int_{\Sigma \cap B_R(p)} H^2 \, d\Acal_\Sigma 
& \geq 2 \pi k_p - {1 \over R} \int_{\Sigma \cap B_R(p)} \frac{d\Acal_\Sigma}{r} 
\\ 
& \geq 2 \pi k_p - {1 \over R} \int_\Sigma \frac{d\Acal_\Sigma}{r} 
\endaligned
\ee
for all $R < \dist_p(\Sigma \cap \Cut_p(M))$. Next, taking $f(x) = x^2$, we find (again for all such $R$)
\be
\aligned
{1 \over 2} \Wscr(\Sigma) \geq {1 \over 2} \int_{\Sigma \cap B_R(p)} H^2 \, d\Acal_\Sigma 
& \ge 2 \pi k_p - \frac{2}{R^2} |\Sigma \cap B_R(p)|
\\
& \geq 2 \pi k_p - \frac{2}{R^2} |\Sigma|.
\endaligned
\ee
The second inequality is an analogue (in our approach) of the estimate established earlier by Topping in flat space~\cite[Equation~(22)]{Topping:MCF}. More generally, by taking $f(x) = x^n$ in \eqref{eq:400} for any $n > 0$, we arrive at \eqref{eq:190}.

Finally, the strongest statement\footnote{We are thankful to Bruno Le Floch for this observation.} is found by choosing $f$ as a limit of smooth nondecreasing cutoffs approaching the step-function: 
\be
f(x) = \begin{cases}
0, \qquad & x\leq r_0/R, 
\\
1, \qquad & x>r_0/R, 
\end{cases}
\ee
where $r_0$ is a regular value of $u$ with $r_0<R$ and $r_0< \dist_p(\Sigma \cap \Cut_p(M)\big)$. Coarea applied to the sharper cutoff term in the equation~\eqref{cutoff-gradient-retained}, above, gives  
\be
\aligned
{1 \over 2} \Wscr(\Sigma) 
&\geq2\pi k_p-\frac1{r_0}\int_{u=r_0}|\nabla_\Sigma u|\,d\Lcal_\Sigma\\ &\geq2\pi k_p-\frac{|\Sigma\cap S_{r_0}(p)|}{r_0} 
\endaligned
\ee
and, by optimizing over $r_0$,  this completes the proof of Theorem~\ref{thm:Will1}.
\end{proof}  


\subsection*{The quadratic remainder}

We now investigate the nonnegative remainder'' $\mathcal{E}_R[\epsilon]$, defined in~\eqref{error}. Observe that $\mathcal{E}_R[\epsilon] = 0$ if we impose that the mean curvature satisfies the identity $H = \eps \la n, \del_r \ra$. Choosing $\epsilon$ as in~\eqref{epchoice}, we deduce that the remainder vanishes if the surface satisfies
\bel{Hndr}
H = \left( \frac{f'\left( r/R \right)}{R (1-f(r/R))} + \frac{2}{r} \right) \la n, \del_r \ra.
\ee
As noted previously, if $\Sigma$ is a round sphere in $\RR^3$, then $H = \frac{2}{r} \la n, \del_r \ra$. As such, in flat space, we may take $R = + \infty$ and the remainder vanishes (for $f\equiv0$) if $\Sigma$ is a sphere.

For an arbitrary function $f$, we ask whether there exists a rotationally symmetric surface in $\RR^3$ that satisfies~\eqref{Hndr}. In particular, we seek a surface $\Sigma$ that is the graph of a function $z = A(t)$, where $t \coloneqq \sqrt{x^2 + y^2}$. A simple calculation shows that
$n=\big(A'(t)\del_t-\del_z\big)/\sqrt{1+A'(t)^2}$
and, hence,
\be
H = {1 \over 2} \dv n = {1 \over 2 t} \del_{t} \left( t \frac{A'(t)}{\sqrt{1+A'(t)^2}} \right).
\ee
Therefore, the equation \eqref{Hndr} takes the form
\bel{eq:450} 
{1 \over 2 t} \del_{t} \left( t \frac{A'(t)}{\sqrt{1+A'(t)^2}} \right) =
\left( \frac{f'\left( r/R \right)}{R (1-f(r/R))} + \frac{2}{r} \right) \frac{t A'(t) - A(t)}{r \sqrt{1+A'(t)^2}},
\ee
where $r = \sqrt{t^2 + A(t)^2}$. Given a function $f$, we could in principle solve this equation for the function $A$. In the limiting case $f \equiv 0$, we recover the solution corresponding to the round two-sphere of radius $a$ in $\RR^3$, namely $A(t) = a - \sqrt{a^2 - t^2}$. It would be interesting to determine whether, for sufficiently large $R$, the equation~\eqref{eq:450} admits solutions close to the round sphere. 


\subsection*{The example of the long cylinder}

Consider now the example where $(M,g)$ is a three-dimensional cylinder, viewed as $\RR^3$ with coordinates $x,y,z \in \RR$, identified with period $2a$ in the $x$-direction. Let $\Sigma$ be the two-sphere of radius $a$ centered at the point $(a, 0, 0)$ and let $p = (0, 0, 0)$ be  the point of two-fold self-intersection of $\Sigma$. Under these conditions, the injectivity radius (at $p$) is $\inj_p(M) = a$, and $\Sigma$ has mean curvature $H = 1/a$. So we have
\be
{1 \over 2} \int_{\Sigma \cap B_a(p)} H^2 \, d\Acal_\Sigma
= 2 \pi \int_0^{\pi/3} \sin\theta \, d\theta
= \pi
\ee
and we can also calculate
\be
\begin{aligned}
{1 \over \inj_p(M)} \int_{\Sigma \cap B_a(p)} {1 \over \dist_p(\cdot)} \, d\Acal_\Sigma
&=
4 \pi \int_0^{\pi/3} \frac{\sin\theta}{\sqrt{2} \sqrt{1 - \cos\theta}} \, d\theta
= 4 \pi.
\end{aligned}
\ee
Since $k_p(\Sigma) = 2$, our calculations confirm that the inequality \eqref{eq:440} is satisfied for this example.


The bounds are nontrivial for surfaces crossing the cut locus. In the same flat cylinder of period $2a$, take instead a sphere of radius $b$ centered at $(b,0,0)$, with $a/2<b<5a/8$ and $p=(0,0,0)$. Its projection is embedded and crosses the cut plane $x=a$ (see Figure~\ref{fig:cylinder-cut-locus}). That intersection has length $2\pi\sqrt{2ab-a^2}$ and $\inj(M)=a$. Theorem~\ref{theo:34-e} gives the strictly positive bound
\be
\Wscr(\Sigma)\geq4\pi\left(1-2\sqrt{2b/a-1}\right)>0.
\ee

\begin{figure}[htbp]
\centering
\begin{tikzpicture}[x=2cm,y=2cm,>=stealth,font=\small]
\begin{scope}
\node at (.65,1.35) {(a) Universal cover};
\draw[->,gray] (-.15,0)--(1.5,0) node[right] {$x$};
\draw[->,gray] (0,-.8)--(0,.8) node[above] {$y$};
\draw[red!70!black,dashed,thick] (1,-.8)--(1,.8);
\node[red!70!black,above] at (1,.8) {$x=a$};
\draw[blue!65!black,thick] (.6,0) circle (.6);
\fill (0,0) circle (.025) node[below left] {$p$};
\fill (.6,0) circle (.018) node[below] {$(b,0)$};
\draw[->] (.6,0)--(.6,.6) node[midway,left] {$b$};
\fill[red!70!black] (1,.447214) circle (.025);
\fill[red!70!black] (1,-.447214) circle (.025);
\node[below] at (.65,-.95) {$b=3a/5$};
\end{scope}
\begin{scope}[xshift=8.5cm]
\node at (0,1.35) {(b) Fundamental strip of the quotient};
\fill[gray!6] (-1,-.8) rectangle (1,.8);
\draw[red!70!black,dashed,thick] (-1,-.8)--(-1,.8);
\draw[red!70!black,dashed,thick] (1,-.8)--(1,.8);
\draw[->,gray] (-1.2,0)--(1.25,0) node[right] {$x$};
\begin{scope}
\clip (-1,-.8) rectangle (1,.8);
\draw[blue!65!black,thick] (.6,0) circle (.6);
\draw[blue!65!black,thick] (-1.4,0) circle (.6);
\end{scope}
\fill (0,0) circle (.025) node[below left] {$p$};
\foreach \x in {-1,1}{
\fill[red!70!black] (\x,.447214) circle (.025);
\fill[red!70!black] (\x,-.447214) circle (.025);}
\draw[<->] (-1,.85)--(1,.85) node[midway,above] {identified};
\node[below] at (-1,-.8) {$-a$};
\node[below] at (1,-.8) {$a$};
\end{scope}
\end{tikzpicture}
\caption{An embedded sphere crossing the cut locus in the flat cylinder $M=\mathbb R^3/(2a\mathbb Z\times\{0\}\times\{0\})$. Both panels show the section $z=0$ for the sphere of radius $b=3a/5$ centered at $(b,0,0)$. In (a), the dashed line is a lift of $\Cut_p(M)$. In (b), the edges $x=-a$ and $x=a$ are identified: the cap beyond $x=a$ reappears at the left edge. The red points represent the cut-intersection circle, of length $2\pi\sqrt{2ab-a^2}$. Thus the sphere is embedded while meeting the cut locus, and the lower bound above, including the cut-locus contribution, is strictly positive.}
\label{fig:cylinder-cut-locus}
\end{figure}

\subsection*{Bounds on the injectivity radius}

Given our results so far, the following statement is immediate and implies, for example, that if a surface has many self-intersections and its mean curvature is small (in the $L^2$ sense), then the ambient manifold must have rather short geodesic loops. 

\begin{corollary}[Bounds on the injectivity radius]
 Let $(M, g)$ be a complete Riemannian $3$-manifold with non-positive sectional curvature.
Suppose that there exists a closed, oriented, immersed surface $\Sigma$ whose largest self-intersection number $k(\Sigma)$ is sufficiently large or Willmore energy is sufficiently small so that $2 \pi k(\Sigma) - {1 \over 2} \int_\Sigma H^2 \, d\Acal_\Sigma > 0$.
Then, the injectivity radius $\inj(M)$ along the immersion is bounded above:
\be
\inj(M) \leq \frac{\max_{p \in \Sigma} \int_\Sigma {1 \over \dist_p} \, d\Acal_\Sigma}{2 \pi k(\Sigma) - {1 \over 2} \int_\Sigma H^2 \, d\Acal_\Sigma}.
\ee
\end{corollary}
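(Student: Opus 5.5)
The plan is to read the bound off from the linear-cutoff estimate \eqref{eq:440} and then let the cutoff radius tend to the injectivity radius. Fix a point $p\in\varphi(\Sigma)$ with $k_p(\Sigma)=k(\Sigma)$. If $\inj(M)=+\infty$, the cutoff limit in Theorem~\ref{thm:Will1} (with $\Fscr_n=0$) already gives $\Wscr(\Sigma)\geq 4\pi k(\Sigma)$. This contradicts the hypothesis $2\pi k(\Sigma)-\tfrac12\Wscr(\Sigma)>0$, so $\inj(M)$ is finite. The right-hand side of the claimed bound is then a positive quotient, since the numerator is positive for a nonempty surface.

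Next I take the estimate \eqref{eq:440}, obtained from \eqref{eq:400} with $f(x)=x$. It says that for every $R<\dist_p(\Sigma\cap\Cut_p(M))$,
\[
\frac12\int_\Sigma H^2\,d\Acal_\Sigma \;\geq\; 2\pi k(\Sigma)-\frac1R\int_\Sigma\frac{d\Acal_\Sigma}{\dist_p}.
\]
Points of $\Cut_p(M)$ lie at distance at least $\inj_p(M)\geq\inj(M)$ from $p$, so $\dist_p(\Sigma\cap\Cut_p(M))\geq\inj(M)$. This holds with the convention that the distance is $+\infty$ when the cut set is empty. Hence the inequality is valid for every $R<\inj(M)$. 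I then bound the integral by its maximum over $p\in\Sigma$ and rearrange, using the positivity of $2\pi k(\Sigma)-\tfrac12\int_\Sigma H^2$:
\[
R\;\leq\;\frac{\max_{q\in\Sigma}\int_\Sigma \dist_q^{-1}\,d\Acal_\Sigma}{2\pi k(\Sigma)-\frac12\int_\Sigma H^2\,d\Acal_\Sigma}.
\]
Letting $R\uparrow\inj(M)$ gives the stated bound.

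The only delicate point is that the maximum in the numerator must be finite and attained. Near each sheet through $q$, $\dist_q^{-1}$ behaves like $1/r$ on a surface disk, which is integrable. Finiteness then follows from normal coordinates and compactness, and the dependence on $q$ is continuous enough for a maximum, or at worst a supremum. If the maximum is infinite, the inequality is trivially true, so this causes no real obstruction.
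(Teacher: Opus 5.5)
Your proposal is correct and follows essentially the paper's route: the corollary is read off from the linear-cutoff estimate \eqref{eq:440} (equivalently, the $\Lscr$ bound in \eqref{eq:191}) by using $\dist_p(\Sigma\cap\Cut_p(M))\geq\inj_p(M)\geq\inj(M)$, letting $R\uparrow\inj(M)$, and rearranging. Your explicit exclusion of the case $\inj(M)=+\infty$ and your remark on the finiteness of the numerator are details the paper leaves implicit when it calls the statement immediate.
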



\section{Ambient spaces with curvature bounded above}
\label{sec:4}

\subsection*{A second decomposition of the Willmore energy}

In Sections~\ref{sec:4} and \ref{sec:WF}, we put $\tau_K=\pi/(2\sqrt K)$ for $K>0$ and $\tau_K=+\infty$ for $K\leq0$. Global weighted integrals require $\max_\Sigma\dist_p<\tau_K$, and exact cut decompositions require the equation~\eqref{cutSigma}. The local estimates only use these conditions on their region of integration. 

The considerations of the previous section were aimed at studying metrics with non-positive curvature, and we may now generalize these results to a Riemannian manifold $(M,g)$ whose sectional curvature $\Sec_{(M,g)}$ is bounded above, that is,  
\bel{eq:500}
\Sec_{(M,g)} \leq K
\ee
for some real $K$. As before, let $\Sigma$ be an immersed surface. 
We denote the induced metric $g_\Sigma$ and we fix a point $p \in \Sigma$ achieving the largest number of self-intersections. We consider the distance function $r=\dist_p(\cdot)$ and introduce the radial vector field (defined away from $p$ and its cut locus) 
\bel{eq:510} 
X_K \coloneqq {1 \over \sn_K(r)} \, \del_r, 
\ee
which involves the weight function ($K \in \RR$) 
\be
\sn_K(r) \coloneqq
\begin{cases}
\frac{\sin(\sqrt K\,r)}{\sqrt K}, &K>0,
\\
r, &K = 0,
\\
\frac{\sinh(\sqrt{|K|}\,r)}{\sqrt{|K|}}, &K<0.
\end{cases}
\ee

This normalization gives $\sn_K\prime(0)=1$ and $r/\sn_K(r)\to1$. On the stated radius range, $\sn_K$ and $\sn_K\prime$ are positive. In order to proceed, we recall that by setting 
\be
\md_K(r) \coloneqq \int_0^r \, \sn_K(t) dt =
\begin{cases}
{1 \over K} \left( 1 - \cos \sqrt{K} r \right), &K > 0,
\\
{1 \over 2} r^2, &K = 0,
\\
{1 \over |K|} \left( \cosh \sqrt{|K|} r - 1 \right), &K < 0, 
\end{cases}
\ee
and relying on the upper curvature bound \eqref{eq:500}, a standard Hessian comparison theorem (see, for instance,~\cite{Petersen})
yields 
\be
\Hess \md_K(r) \ge \sn_K^{\prime}(r) \, g
\ee
and, for the trace, this implies that 
\bel{eq:444}
\tr_{g_\Sigma} \left( \Hess \md_K(r) \right) \ge \sn_K^{\prime}(r) \left( \tr_{g_\Sigma} g \right) = 2 \, \sn_K^{\prime}(r).
\ee
This motivates us to introduce the vector field \eqref{eq:510}. 

Let us introduce the {\sl deviation functional} 
\begin{subequations}
\label{nota-K} 
\bel{DevD-K}
\aligned
\Dscr_{K,p}(\Sigma) 
& \coloneqq \int_\Sigma \Big( {1 \over 2} H^2 - \delta_{K,p} \Big) \, d\Acal_\Sigma\text{ (with no prescribed sign)},
\\
\delta_{K,p}
&\coloneqq
- 2 \frac{\sn_K^{\prime}(r)}{\sn_K(r)^2} \langle n, \del_r \rangle^2
+ \frac{2}{\sn_K(r)} \la \del_r, H n \ra, 
\endaligned
\ee
as well as the {\sl cut-locus functional}\footnote{This term is non-negative by Lemma~\ref{lemma:51}.}
\bel{CutlocusC-K}
\Cscr_{K,p}(\Sigma) \coloneqq \int_{{\Sigma \cap \Cut_p(M)}} \frac1{\sn_K(r)} \la N, \del_r^- - \del_r^+ \ra \, d\Lcal_\Sigma
\geq 0,
\ee
and the {\sl curvature functional}
\bel{CurvR-K}
\aligned
\Rscr_{K,p}(\Sigma) 
&\coloneqq \int_{\Sigma \setminus \Cut_p(M)} \rho_{K,p} \, d\Acal_\Sigma,
\\ 
\rho_{K,p} 
&\coloneqq {1 \over \sn_K(r)^2} \tr_{g_\Sigma} \Big( \Hess \md_K(r) - \sn_K^{\prime}(r) \, g \Big).
\endaligned
\ee
\end{subequations} 
With this notation, we obtain the following identity, which we next apply under the assumption \eqref{eq:500}. 

\begin{proposition}[Decomposition of the Willmore energy]
\label{prop:Simon4-K}
Under the radius and cut-length assumptions above, for any real $K$, the Willmore energy of a closed surface immersed in a Riemannian $3$-manifold satisfies the identity
\bel{eq:515}
{1 \over 2} \Wscr(\Sigma)
= 2 \pi \, k_p(\Sigma) 
 + \Dscr_{K,p}(\Sigma) + \Rscr_{K,p}(\Sigma) - \Cscr_{K,p}(\Sigma)
\ee
with the notation above. 
\end{proposition}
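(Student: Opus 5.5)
We apply Proposition~\ref{prop:fundamental} with the test-function $\psi=1/\sn_K$, i.e.\ to the vector field $X_K$ in \eqref{eq:510}. This choice is admissible: on the radius range $r<\tau_K$ the function $\sn_K$ is positive and smooth, so $\psi$ is locally Lipschitz on $(0,\tau_K)$. Since $r/\sn_K(r)\to1$, the limit \eqref{eq:phi} gives $\psi_0=1$, and the left-hand side of \eqref{Simon2} is exactly $2\pi k_p$. The cut term in \eqref{Simon2} is $-\int\psi\la\del_r^+-\del_r^-,N\ra\,d\Lcal_\Sigma=+\Cscr_{K,p}(\Sigma)$. Hence \eqref{Simon2} reads
\[
2\pi k_p=\int_\Sigma\delta\,d\Acal_\Sigma-\int_{\Sigma\setminus\Cut_p(M)}\rho\,d\Acal_\Sigma+\Cscr_{K,p}(\Sigma),
\]
where $\delta,\rho$ are the densities of \eqref{eq:defchitheta} for this $\psi$. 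The remaining work is to identify these two densities with $\delta_{K,p}$ and $\rho_{K,p}$.

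For $\delta$, we use $\psi'=-\sn_K'/\sn_K^2$. Then \eqref{eq:defchitheta} gives
\[
\delta=\frac{2}{\sn_K}\la n,\del_r\ra H-\frac{2\sn_K'}{\sn_K^2}\la n,\del_r\ra^2=\delta_{K,p},
\]
directly. Consequently $\int_\Sigma\delta\,d\Acal_\Sigma=\tfrac12\Wscr(\Sigma)-\Dscr_{K,p}(\Sigma)$.

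For $\rho$, we start from $\Hess\md_K(r)=\sn_K'\,dr\otimes dr+\sn_K\Hess r$, which holds because $\md_K'=\sn_K$ and $\md_K''=\sn_K'$. This yields
\[
\frac{1}{\sn_K^2}\Big(\Hess\md_K-\sn_K' g\Big)=\frac{1}{\sn_K}\Hess r-\frac{\sn_K'}{\sn_K^2}\big(g-dr\otimes dr\big)=\psi\Hess r+\psi'\big(g-dr\otimes dr\big).
\]
Taking $\tr_{g_\Sigma}$ gives $\rho=\rho_{K,p}$ almost everywhere off $E_p\cup Z_p$, and therefore $\int\rho=\Rscr_{K,p}(\Sigma)$. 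Substituting both identifications and rearranging gives \eqref{eq:515}.

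The main obstacle is justifying the hypotheses of Proposition~\ref{prop:fundamental} for this $\psi$, namely integrability of the densities and the Gauss--Green jump term. Near $Z_p$ we would use normal coordinates. In those coordinates $\sn_K(r)=r+O(r^3)$ and $\Hess r=r^{-1}(g-dr\otimes dr)+O(r)$, so $\rho_{K,p}=O(1)$. Likewise $\delta_{K,p}=O(1/r)$, since on each smooth sheet through $p$ one has $\la n,\del_r\ra=O(r)$. Both are integrable on a surface. Away from $p$, the radius restriction $\max_\Sigma\dist_p<\tau_K$ keeps $\sn_K$ and $\sn_K'$ bounded away from zero, so the weights are bounded. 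The cut-length hypothesis \eqref{cutSigma} together with Lemma~\ref{cut-measure-lemma} handles the jump measure exactly as in the proof of Proposition~\ref{prop:fundamental}. No curvature bound is needed for the identity itself; \eqref{eq:500} enters only later, through \eqref{eq:444}, to show $\rho_{K,p}\ge0$.
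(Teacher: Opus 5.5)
Your proof is correct and is essentially the paper's argument: you apply Proposition~\ref{prop:fundamental} with $\psi=1/\sn_K$ (so $\psi_0=1$) and identify the densities. The paper does this via $\dive_\Sigma(\sn_K(r)\del_r)=\tr_{g_\Sigma}\Hess\md_K(r)$ and Lemma~\ref{200}, which is the same computation as your expansion $\Hess\md_K=\sn_K'\,dr\otimes dr+\sn_K\Hess r$. One small point: bounded weights alone do not give integrability of $\rho_{K,p}$ away from $Z_p$, because $\tr_{g_\Sigma}\Hess r$ may be unbounded near $E_p$. Its $L^1$ bound instead comes from the absolutely continuous part of the BV derivative in Lemma~\ref{cut-measure-lemma}, as the paper remarks after Proposition~\ref{prop:fundamental}.
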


\begin{proof}
Similar calculations to those in Section \ref{sec:3} show that
\be
\dive_\Sigma \left( \sn_K(r) \del_r \right) = \tr_{g_\Sigma} \left( \Hess \md_K(r) \right),
\ee
and, therefore, 
\be
\dive_\Sigma X_K = 
\dive_\Sigma\left({1\over\sn_K(r)}\del_r\right) 
 = 2 \, \frac{\sn_K^{\prime}(r)}{\sn_K(r)^2} \langle n, \del_r \rangle^2 + \rho_{K,p}.
\ee
Applying Proposition~\ref{prop:fundamental}, we deduce that
\bel{eq:515-deux}
\aligned
2 \pi k  
= 
&
\int_\Sigma \delta_{K,p} \, d\Acal_\Sigma - \int_{\Sigma \setminus \Cut_p(M)} \rho_{K,p} \, d\Acal_\Sigma
\\
&- \int_{{\Sigma \cap \Cut_p(M)}} {1 \over \sn_K(r)} \la \del_r^+ - \del_r^-, N \ra \, d\Lcal_\Sigma. 
\endaligned
\ee
\end{proof}

A similar calculation to that in the proof of Lemma~\ref{deltapos} gives the following result.

\begin{lemma}[Sign of the curvature contribution]
\label{rem:delta}
Under the curvature assumption \eqref{eq:500}, one has $\rho_{K,p} \geq 0$ and, consequently, 
$\Rscr_{K,p}(\Sigma) \geq 0$. 
Equality holds if the ambient space $(M,g)$ has constant sectional curvature $K$.
\end{lemma}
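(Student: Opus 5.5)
The plan is to mimic the proof of Lemma~\ref{deltapos}, with the comparison inequality~\eqref{eq:444} in place of the flat Hessian comparison. Work pointwise at points of $\Sigma\setminus(\Cut_p(M)\cup\{p\})$ with $r<\tau_K$, where $\sn_K(r)>0$ and $\sn_K'(r)>0$.

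First, I will recall that, under the upper bound $\Sec_{(M,g)}\leq K$, the Hessian comparison theorem gives
\[
\Hess r \geq \frac{\sn_K'(r)}{\sn_K(r)}\,(g-dr\otimes dr).
\]
This holds for $r$ below the conjugate radius of the model. In our range $r<\tau_K$, and away from the cut locus, there are no conjugate points before $\pi/\sqrt K$, so the Rauch comparison applies along the minimizing radial geodesic. From $\Hess\md_K(r)=\sn_K'(r)\,dr\otimes dr+\sn_K(r)\Hess r$, I then obtain $\Hess\md_K(r)\geq \sn_K'(r)\,g$ as quadratic forms, which is exactly the ingredient behind~\eqref{eq:444}.

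Second, I take the trace against $g_\Sigma$ using a $g_\Sigma$-orthonormal frame $\{e_1,e_2\}$. Each term $(\Hess\md_K(r)-\sn_K'(r)g)(e_i,e_i)$ is nonnegative, and dividing by $\sn_K(r)^2>0$ gives $\rho_{K,p}\geq0$ almost everywhere. Integrating over $\Sigma\setminus\Cut_p(M)$ gives $\Rscr_{K,p}(\Sigma)\geq0$. The point $p$ and the cut set are null sets for area, by~\eqref{cutSigma}.

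Third, for the equality case, suppose $M$ has constant curvature $K$. Within the injectivity domain the metric is, in polar coordinates, $dr^2+\sn_K(r)^2 g_{S^2}$. Hence $\Hess r=\frac{\sn_K'}{\sn_K}(g-dr\otimes dr)$ exactly, and therefore $\Hess\md_K=\sn_K' g$. It follows that $\rho_{K,p}\equiv0$ and $\Rscr_{K,p}=0$.

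The main obstacle is justifying the comparison up to $r<\tau_K$ for $K>0$. This requires that the radial geodesic is minimizing and free of conjugate points, which follows from being off the cut locus. I will also check that $\sn_K'>0$ there, so that the ratio is finite and the normalization is consistent; the restriction $r<\tau_K$ guarantees both.
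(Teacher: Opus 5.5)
Your proposal is correct and is essentially the paper's argument: the paper just refers back to Lemma~\ref{deltapos}, using the comparison $\Hess\md_K(r)\geq\sn_K'(r)\,g$ behind \eqref{eq:444}, tracing it over a $g_\Sigma$-orthonormal frame and dividing by $\sn_K(r)^2$, with equality in constant curvature $K$ because the comparison is then an identity. One small remark: the sign of $\rho_{K,p}$ only needs $\sn_K(r)>0$ (i.e.\ $r<\pi/\sqrt K$ when $K>0$), so the positivity of $\sn_K'(r)$ that you check plays no role here.
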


In view of the identity~\eqref{eq:515-deux}, it follows that for a distance sphere in $S^3$ we have $\int_\Sigma \delta_{K,p} \, d\Acal_\Sigma = 2 \pi$, as can be checked directly from the formulas in Example~\ref{example:S3}. Now, if we attempt to proceed as in the case of non-positive curvature, we have the following.

\begin{lemma}[Pointwise weighted bound]
\label{lemma43}
Suppose that $\Sec_{(M,g)} \leq K$ holds and in the case $K > 0$, one also requires that $0 < r < \tfrac{\pi}{2\sqrt{K}}$. Then, on the surface $\Sigma$, one has 
\bel{Hsquared}
{1 \over 2} 
\delta_{K,p}
=
- \frac{\sn_K^{\prime}(r)}{\sn_K(r)^2} \langle n, \del_r \rangle^2
+
{1 \over \sn_K(r)} \la \del_r, n \ra \, H
\leq {1 \over 4 \, \sn_K^{\prime}(r)} H^2.
\ee
\end{lemma}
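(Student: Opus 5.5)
The identity part of \eqref{Hsquared} is immediate from the definition of $\delta_{K,p}$ in \eqref{DevD-K}: dividing by two gives exactly the displayed expression, since $\la \del_r, Hn\ra = \la \del_r, n\ra H$. So the content of the lemma is the inequality. I would prove it pointwise, as a square completion in the spirit of Lemma~\ref{lem:120}. The only new feature compared with the flat case is the weight $\sn_K'(r)$ in front of the quadratic term, and its sign has to be controlled.

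Set $x \coloneqq \la n,\del_r\ra$, $a \coloneqq \sn_K'(r)/\sn_K(r)^2$ and $b \coloneqq 1/\sn_K(r)$. The right-hand side of \eqref{Hsquared} is then the quadratic polynomial $-a x^2 + b x H$ in the variable $x$. First I would verify that $a>0$ on the admissible range. For $K\leq 0$ this holds for all $r>0$, since $\sn_K>0$ and $\sn_K'\geq 1$. For $K>0$ the hypothesis $0<r<\pi/(2\sqrt K)$ gives $\sn_K(r)=\sin(\sqrt K r)/\sqrt K>0$ and $\sn_K'(r)=\cos(\sqrt K r)>0$. This is precisely where the radius restriction $\tau_K$ enters.

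With $a>0$, I complete the square:
\[
-a x^2 + b x H = -a\Big(x - \frac{bH}{2a}\Big)^2 + \frac{b^2H^2}{4a}
\leq \frac{b^2}{4a}H^2 .
\]
Substituting the values of $a$ and $b$ gives $b^2/(4a) = \big(1/\sn_K^2\big)\cdot \sn_K^2/(4\sn_K') = 1/(4\sn_K'(r))$, which is the claimed bound. Equality holds exactly when $\la n,\del_r\ra\,\sn_K'(r)/\sn_K(r) = H/2$. This is the analogue of the saturation condition in Lemma~\ref{lem:120} and is satisfied by geodesic spheres centered on the antipodal radial direction in the model space.

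The curvature assumption $\Sec\leq K$ is not actually used in this pointwise step; it is needed only for $\rho_{K,p}\geq 0$ in Lemma~\ref{rem:delta}. The only delicate point is therefore the positivity of $\sn_K'$. Without the restriction $r<\pi/(2\sqrt K)$, the coefficient of $x^2$ would change sign and no upper bound in terms of $H^2$ alone would hold. I would also note that the bound holds almost everywhere off $Z_p\cup E_p$, where $\del_r$ is defined.
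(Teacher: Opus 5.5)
Your proof is correct and is the same square completion the paper uses: with $x=\la n,\del_r\ra$, the paper writes the left-hand side as $-\frac{\sn_K'(r)}{\sn_K(r)^2}\big(x-\frac{\sn_K(r)}{2\sn_K'(r)}H\big)^2+\frac{H^2}{4\sn_K'(r)}$, and, as you note, the only input is $\sn_K'(r)>0$ on the admissible range. One side remark of yours is false, though not needed for the inequality: geodesic spheres in the model spaces do \emph{not} saturate it, since by Example~\ref{example:S3} such a sphere in $S^3(\kappa^2)$ satisfies $\la n,\del_r\ra\,\sn_K'(r)/\sn_K(r)=\frac{\sn_K'(r)}{1+\sn_K'(r)}\,H\neq H/2$ for $r>0$, which is exactly why the paper says this lemma cannot give an optimal inequality and introduces $\Wscrtt_{K,p}$ built on $\overline{\omega}_K=2\sn_K'/(1+\sn_K')$.
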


\begin{proof}
We have
\be
\begin{aligned}
& - \frac{\sn_K^{\prime}(r)}{\sn_K(r)^2} \langle n, \del_r \rangle^2
+
{1 \over \sn_K(r)} \la \del_r, n \ra \, H
\\
&=
- \frac{\sn_K^{\prime}(r)}{\sn_K(r)^2}
\left| \langle n, \del_r \rangle n - {1 \over 2} \frac{\sn_K(r)}{\sn_K^{\prime}(r)} H n \right|^2
+ {1 \over 4 \, \sn_K^{\prime}(r)} H^2
\le
{1 \over 4 \, \sn_K^{\prime}(r)} H^2,
\end{aligned}
\ee
as required.
\end{proof}

We have derived an identity with $\delta_{K,p}$. However, contrary to the situation in the flat case, the term $\langle n, \del_r \rangle n - {1 \over 2} \frac{\sn_K(r)}{\sn_K^{\prime}(r)} H n$ does not vanish for distance spheres in the model spaces. As such, we cannot recover an optimal inequality using this lemma. We therefore proceed in a different fashion and introduce next a ``weighted version'' of the Willmore functional.

\begin{example}
\label{example:S3}
Let $M = S^3(\kappa^2)$ be the three-sphere of constant curvature $\kappa^2 > 0$ and $\Sigma \cong S^2 \subset M$ a distance sphere of radius $a$ with $0<a<\pi/(4\kappa)$, then
\be
\la n, \del_r \ra = \cot \kappa a \left( \frac{1-\cos \kappa r}{\sin \kappa r} \right),
\qquad 
H_{\kappa} = \kappa \cot \kappa a,
\qquad 
\delta_{\kappa} = \frac{2 \kappa^2 \cot^2 \kappa a}{\left( 1 + \cos \kappa r \right)^2}.
\ee
Similarly, for a sphere in $M = H^3(-\kappa^2)$, the hyperbolic space of curvature $K = - \kappa^2$, then for a distance sphere of radius $a$ we have
\be
\begin{aligned}
\la n, \del_r \ra = \coth \kappa a \left( \frac{\cosh \kappa r - 1}{\sinh \kappa r} \right),
\qquad 
H_{\kappa} = \kappa \coth \kappa a,
\\
\qquad 
\delta_{\kappa} = \frac{2 \kappa^2 \coth^2 \kappa a}{\left( 1 + \cosh \kappa r \right)^2}.
\end{aligned}
\ee
These formulas are derived as follows. Let $m \in S_K^3$ in the $3$-sphere be a fixed point and $\Sigma = \{ q \in S_K^3; d(m, q) = a \}$. Let $p \in \Sigma$ be fixed. Given $q \in \Sigma$, the inner product $\la n, \del_r \ra$ at $q$ may be viewed as the inner product of the unit tangent vectors to the minimizing geodesics in $S_K^3$ from $p$ to $q$ and from $m$ to $q$. This is simply the cosine of the angle $\alpha$ at $q$ of the triangle $pqm$ in $S_K^3$ with side-lengths $|pq| = r$, $|mp| = |mq| = a$. Using the cosine formula (see~\cite[Equation~(43)]{Meyer}), we have
\be
\cos \kappa a = \cos \kappa a \cos \kappa r + \sin \kappa a \sin \kappa r \cos\alpha.
\ee
Re-arranging gives our first equality. The expression for the mean curvature follows directly from the explicit form of the metric on $S_K^3$. The formula for $\delta_K$ follows from the first two formulas by a straightforward calculation. The proof in the hyperbolic case is similar.
\end{example}


\section{A weighted Willmore functional} 
\label{sec:WF}

As observed in Lemma~\ref{rem:delta}, the curvature term $\Rscr_{K,p}(\Sigma)$ has a favorable sign, while the deviation $\Dscr_{K,p}(\Sigma)$ has no prescribed sign. Our inequality below involves a weighted version of the Willmore energy and still contains the cut-locus contribution $\Cscr_{K,p}(\Sigma)$ (which will be dealt with in the following subsection with a further modification of the vector field $X_K$). 
We thus arrive at the following conclusion.  

\begin{proposition}[Weighted Willmore energy in ambient spaces with curvature bounded above]
\label{theo:44}
Let $K\in\RR$ be a constant, assume the equation~\eqref{cutSigma}, and suppose that the ambient space $(M,g)$ satisfies the upper curvature bound \eqref{eq:500}, while $\Sigma$ is a closed, oriented, smoothly immersed surface  
such that $\diam(\Sigma) < \pi/(2 \sqrt{K})$ (if $K$ is positive). Given any $p\in\Sigma$ and $r=r_p=\dist_p(\cdot)$, set, where $H\ne0$,  
\be
\omega_{K,p} \coloneqq 2 \frac{\sn_K^{\prime}(r)}{\sn_K(r)} \, \frac{\la n,\del_r\ra}{H}.
\ee
At zeros of $H$, interpret the density in the following integral as $2\delta_{K,p}$, which involves no division by $H$. The {\rm weighted Willmore energy}
\be
{1 \over 2} \Wscrtt_{K,p}(\Sigma) \coloneqq {1 \over 2} \int_\Sigma
{( 1 - ( 1 - \omega_{K,p})^2) \over \sn_K^{\prime}(r_p)} H^2 \, d\Acal_\Sigma
\ee
satisfies the bound
\be
{1 \over 2} \Wscrtt_{K,p}(\Sigma) 
\geq 2 \pi k_p(\Sigma)  
- \Cscr_{K,p}(\Sigma).
\ee
If the ambient manifold is a constant curvature space and $\Sigma$ is a distance sphere, then this inequality becomes an {\rm equality.} 
\end{proposition}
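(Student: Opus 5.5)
The plan is to show that the weighted density is, pointwise, exactly twice the density $\delta_{K,p}$ of Section~\ref{sec:4}. The inequality will then follow from the identity~\eqref{eq:515-deux}, established in the proof of Proposition~\ref{prop:Simon4-K}, together with the sign information in Lemma~\ref{rem:delta}. No square completion is needed: the weight $1-(1-\omega_{K,p})^2$ is designed so that the deviation term is absorbed exactly.

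\emph{Step 1 (algebraic identity).} Where $H\neq0$, expand $1-(1-\omega_{K,p})^2=2\omega_{K,p}-\omega_{K,p}^2$ and substitute the definition of $\omega_{K,p}$. This gives
\[
\frac{2\omega_{K,p}}{\sn_K'(r)}H^2=\frac{4}{\sn_K(r)}\langle n,\del_r\rangle H,
\qquad
\frac{\omega_{K,p}^2}{\sn_K'(r)}H^2=\frac{4\,\sn_K'(r)}{\sn_K(r)^2}\langle n,\del_r\rangle^2,
\]
so that
\[
\tfrac12\,\frac{1-(1-\omega_{K,p})^2}{\sn_K'(r)}H^2=\delta_{K,p}.
\]
This requires $\sn_K'(r)>0$. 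That holds because $\diam(\Sigma)<\pi/(2\sqrt K)$ when $K>0$, and automatically when $K\le 0$. At zeros of $H$, the prescribed interpretation of the density as $2\delta_{K,p}$ is exactly this identity. Hence $\tfrac12\Wscrtt_{K,p}(\Sigma)=\int_\Sigma\delta_{K,p}\,d\Acal_\Sigma$.

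\emph{Step 2 (apply the decomposition).} Under~\eqref{cutSigma} and the radius restriction, identity~\eqref{eq:515-deux} reads
\[
\int_\Sigma\delta_{K,p}\,d\Acal_\Sigma=2\pi k_p+\Rscr_{K,p}(\Sigma)-\Cscr_{K,p}(\Sigma).
\]
Here I rewrite the jump term using $\la\del_r^+-\del_r^-,N\ra=-\la\del_r^--\del_r^+,N\ra$, so it becomes $-\Cscr_{K,p}$. This identity comes from Proposition~\ref{prop:fundamental} with $\psi=1/\sn_K$, for which $\psi_0=1$ since $r/\sn_K(r)\to1$. By Lemma~\ref{rem:delta}, the hypothesis $\Sec\le K$ gives $\Rscr_{K,p}\ge0$, and the claimed bound follows.

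\emph{Step 3 (equality case).} In constant curvature $K$, the Hessian comparison is an equality: $\Hess\md_K=\sn_K'\,g$. Therefore $\rho_{K,p}\equiv0$ and $\Rscr_{K,p}=0$, so the inequality is an equality. This holds for any surface, and in particular for distance spheres; one can check it against the formulas of Example~\ref{example:S3}. For $K>0$ the diameter bound keeps the sphere within the allowed radius range.

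The main obstacle is justification rather than algebra. First, the integrability of $\delta_{K,p}$ near $Z_p$, where $\delta_{K,p}\sim r^{-2}\langle n,\del_r\rangle^2+r^{-1}H$ and $\langle n,\del_r\rangle=O(r)$ in normal coordinates. Second, the well-definedness of the weighted density where $H$ vanishes. I would handle both by working throughout with the $H$-free expression $2\delta_{K,p}$, which makes $\omega_{K,p}$ only a notational device.
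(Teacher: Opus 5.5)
Your proposal is correct and follows the paper's own (implicit) argument. The weighted density is identically $2\delta_{K,p}$, so $\tfrac12\Wscrtt_{K,p}(\Sigma)=\int_\Sigma\delta_{K,p}\,d\Acal_\Sigma$, and the bound then follows from identity~\eqref{eq:515-deux} together with $\Rscr_{K,p}\geq0$ (Lemma~\ref{rem:delta}). The equality case comes from $\Rscr_{K,p}=0$ in constant curvature exactly $K$; your observation that this makes equality hold for every such surface, not only distance spheres, is also correct.
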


\begin{corollary}[A weighted embedding criterion] Under the assumptions in Proposition~\ref{theo:44}, choose $p$ with $k_p(\Sigma)=k(\Sigma)$. If the diameter of $\Sigma$ is less than $\inj_p(M)$, then $\Cscr_{K,p}(\Sigma)$ vanishes and one concludes that 
\bel{Simon5B-K}
{1 \over 4 \pi} \Wscrtt_{K,p}(\Sigma) \geq k(\Sigma).   
\ee
In particular, if the weighted Willmore energy of a closed surface is less than $8\pi$, then the surface admits no self-intersection and the surface is actually embedded. 
\end{corollary}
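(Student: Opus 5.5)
The plan is to specialize Proposition~\ref{theo:44} to the point $p$ with $k_p(\Sigma)=k(\Sigma)$ and to show that the cut-locus term drops out under the diameter hypothesis. We fix such a $p$; it exists because $\Sigma$ is compact and the multiplicity takes finitely many integer values.

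\emph{The cut term vanishes.} For every $q\in\varphi(\Sigma)$ we have $\dist_p(q)\leq\diam(\Sigma)$. This holds for either notion of diameter, since the ambient distance is bounded by the length of any curve in the surface. The hypothesis then gives $\dist_p(q)<\inj_p(M)$ on all of $\varphi(\Sigma)$. Every cut point of $p$ lies at distance at least $\inj_p(M)$ from $p$, so $\Sigma\cap\Cut_p(M)=\emptyset$, i.e. $E_p=\emptyset$. Consequently the finite-length hypothesis~\eqref{cutSigma} holds trivially, and the measure $\nu_p$ is zero, so $\Cscr_{K,p}(\Sigma)=0$.

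\emph{The weighted bound.} The remaining hypotheses of Proposition~\ref{theo:44} are assumed in the corollary: the curvature bound~\eqref{eq:500}, and $\diam(\Sigma)<\pi/(2\sqrt K)$ when $K>0$. The latter ensures $\max_\Sigma\dist_p<\tau_K$, so $\sn_K$ and $\sn_K'$ are positive and the weighted integrand is well defined. Proposition~\ref{theo:44} then yields $\tfrac12\Wscrtt_{K,p}(\Sigma)\geq 2\pi k_p(\Sigma)=2\pi k(\Sigma)$. Dividing by $2\pi$ gives~\eqref{Simon5B-K}.

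\emph{Embeddedness.} If $\Wscrtt_{K,p}(\Sigma)<8\pi$ for this $p$, then~\eqref{Simon5B-K} forces $k(\Sigma)<2$. Since $k(\Sigma)$ is a positive integer, $k(\Sigma)=1$, so $\varphi$ is injective. An injective immersion of a compact surface is a homeomorphism onto its image, because it is a continuous bijection from a compact space to a Hausdorff space. Hence $\varphi$ is an embedding.

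The only delicate point is the first step. One must check that the diameter condition really keeps the whole image inside $B_{\inj_p(M)}(p)$, which is where we use that surface distances dominate ambient distances. One must also note that the weighted energy in the last statement is evaluated at the maximal-multiplicity point $p$, since $\Wscrtt_{K,p}$ depends on $p$.
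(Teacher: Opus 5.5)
Your proposal is correct and follows the paper's own (implicit) argument. The diameter hypothesis places $\varphi(\Sigma)$ inside $B_{\inj_p(M)}(p)$, so $E_p=\emptyset$ and $\Cscr_{K,p}(\Sigma)=0$, and Proposition~\ref{theo:44} then gives $\tfrac12\Wscrtt_{K,p}(\Sigma)\geq 2\pi k(\Sigma)$, just as in the corresponding step of Proposition~\ref{theo:34}; your added points (surface distances dominate ambient ones, and the $8\pi$ criterion is read at the maximal-multiplicity point $p$) are appropriate clarifications of what the paper leaves unstated.
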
 

Note that the functional $\Wscrtt_{K,p}(\Sigma)$ need not be non-negative. Here $\omega_K$ abbreviates $\omega_{K,p}$. By letting 
$
\overline{\omega}_K\coloneqq\frac{2\sn_K\prime(r)}{1+\sn_K\prime(r)}, 
$
it follows that 
\be
\frac{(1 - (1 - \omega_K)^2) }{\sn_K\prime(r)}
= \frac{4}{(1+\sn_K\prime(r))^2} \frac{\omega_K \left( 2 - \omega_K \right)}{\overline{\omega}_K \left( 2 - \overline{\omega}_K \right)}.
\ee
By observing that $x (2-x) \leq 1$ for all real $x$, we immediately recover the earlier inequality~\eqref{Hsquared}. Furthermore, by introducing the positive and negative parts 
\be
{1 \over 2} \Wscrtt^{\pm}_{K,p}(\Sigma) 
\coloneqq {1 \over 2} \int_\Sigma
\frac{4}{(1+\sn_K\prime(r))^2} \frac{\left( \omega_K \left( 2 - \omega_K \right) \right)_{\pm}}{\overline{\omega}_K \left( 2 - \overline{\omega}_K \right)} H^2 \, d\Acal_\Sigma,
\ee
we arrive at the lower bound 
\be
{1 \over 2} \Wscrtt^+_{K,p}(\Sigma) 
\ge {1 \over 2} \Wscrtt^-_{K,p}(\Sigma)
+ 2 \pi k_p(\Sigma)  
- \Cscr_{K,p}(\Sigma).
\ee
The positive and negative densities are defined everywhere by $2(\delta_{K,p})_\pm$, including where $H=0$.
Now, the arguments developed for non-positively curved manifolds apply and we reach one of our main conclusions. 

\begin{theorem}[Lower bound under an upper curvature bound]
\label{theo:34-ff}
Let $\Sigma \hookrightarrow M$ be a closed surface  
immersed in a manifold $(M,g)$ with curvature 
$\Sec_{(M,g)} \leq K$ bounded above by a constant $K$. 
Assume $\max_\Sigma\dist_p<\tau_K$ and the equation~\eqref{cutSigma}. If $p\in\Sigma$ is a $k$-fold point, one has
\bel{Simon5-ff}
{1 \over 2} \Wscrtt_{K,p}^+(\Sigma) \geq
 2 \pi k_p(\Sigma)  
  - 2 \, \int_{E_p}\frac{d\Hcal^1_{g_\Sigma}}{\sn_K(r)}. 
\ee  
\end{theorem}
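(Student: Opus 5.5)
Our plan is to combine the decomposition of Proposition~\ref{prop:Simon4-K} with the sign information of Lemma~\ref{rem:delta} and the cut-locus bound of Lemma~\ref{lemma:51}. We work under the standing assumptions $\max_\Sigma\dist_p<\tau_K$ and \eqref{cutSigma}. Under these, $\sn_K$ and $\sn_K'$ are positive along $\Sigma\setminus Z_p$, and the identity \eqref{eq:515-deux} is valid.

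First we rewrite \eqref{eq:515-deux} as
\[
2\pi k_p=\int_\Sigma\delta_{K,p}\,d\Acal_\Sigma-\Rscr_{K,p}(\Sigma)+\Cscr_{K,p}(\Sigma).
\]
By Lemma~\ref{rem:delta}, $\Rscr_{K,p}(\Sigma)\geq0$, so
\[
2\pi k_p\leq\int_\Sigma\delta_{K,p}\,d\Acal_\Sigma+\Cscr_{K,p}(\Sigma).
\]
Next we identify the density. Where $H\neq0$, a direct expansion gives
\[
\frac{1-(1-\omega_{K,p})^2}{\sn_K'(r)}\,H^2=\frac{\omega_{K,p}(2-\omega_{K,p})}{\sn_K'(r)}\,H^2=2\delta_{K,p},
\]
since $\omega_{K,p}H^2/\sn_K'=2\la n,\del_r\ra H/\sn_K$ and $\omega_{K,p}^2H^2/\sn_K'=4\sn_K'\la n,\del_r\ra^2/\sn_K^2$. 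Where $H=0$, the density is $2\delta_{K,p}$ by the convention already fixed. Hence $\tfrac12\Wscrtt_{K,p}(\Sigma)=\int_\Sigma\delta_{K,p}\,d\Acal_\Sigma$. This is exactly Proposition~\ref{theo:44}, and we obtain it along the way.

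Then we split $\delta_{K,p}=(\delta_{K,p})_+-(\delta_{K,p})_-$. By the stated convention, the densities of $\tfrac12\Wscrtt^\pm_{K,p}$ are $(\delta_{K,p})_\pm$; the positive factor $4/(1+\sn_K')^2$ divided by $\overline\omega_K(2-\overline\omega_K)$ matches $1/\sn_K'$, so the signs agree. Therefore
\[
\int_\Sigma\delta_{K,p}\,d\Acal_\Sigma\leq\int_\Sigma(\delta_{K,p})_+\,d\Acal_\Sigma=\tfrac12\Wscrtt^+_{K,p}(\Sigma).
\]
Dropping the nonnegative $\Wscrtt^-$ term is the only loss in this step.

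Finally, we bound the cut term. By Lemma~\ref{lemma:51} and Lemma~\ref{cut-measure-lemma}, $\la N,\del_r^--\del_r^+\ra=\beta_p\in[0,2]$ holds $\Hcal^1_{g_\Sigma}$-almost everywhere on $E_p$. Since $\sn_K(r)>0$ on the radius range, this gives
\[
\Cscr_{K,p}(\Sigma)\leq2\int_{E_p}\frac{d\Hcal^1_{g_\Sigma}}{\sn_K(r)},
\]
and combining the three inequalities yields \eqref{Simon5-ff}.

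The main obstacle is integrability near $Z_p$: we need $\delta_{K,p}\in L^1$ so that its positive and negative parts can be separated. We would handle this as in the remark after Proposition~\ref{prop:fundamental}. In normal coordinates $\la n,\del_r\ra=O(r)$ on each sheet through $p$, so $\delta_{K,p}$ is bounded near $Z_p$. Away from $Z_p$ and $E_p$, boundedness follows from the positivity of $\sn_K$ and $\sn_K'$ on the compact range $\dist_p\le\max_\Sigma\dist_p<\tau_K$.
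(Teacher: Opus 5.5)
Your proposal is correct and follows essentially the same route as the paper. You use the decomposition of Proposition~\ref{prop:Simon4-K} with $\Rscr_{K,p}\geq0$ from Lemma~\ref{rem:delta}, the identification $\tfrac12\Wscrtt_{K,p}(\Sigma)=\int_\Sigma\delta_{K,p}\,d\Acal_\Sigma$ (i.e.\ Proposition~\ref{theo:44}), drop the $\Wscrtt^-_{K,p}$ part, and bound $\Cscr_{K,p}$ via $\beta_p\leq2$ from Lemma~\ref{lemma:51}, which the paper compresses into ``the arguments developed for non-positively curved manifolds apply''; your check that $\delta_{K,p}$ is bounded near $Z_p$, so the positive/negative splitting is legitimate, is a detail the paper leaves implicit.
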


If the cut set is nonempty, monotonicity of $\sn_K$ up to $\tau_K$ gives
\be
\int_{E_p}\frac{d\Hcal^1_{g_\Sigma}}{\sn_K(r)}\leq
\frac{|\Sigma\cap\Cut_p(M)|}{\sn_K(\inj_p(M))}.
\ee
For $K\leq0$, the weaker denominator $\inj(M)$ remains valid. For $K>0$, we put $D=\max_\Sigma\dist_p<\tau_K$. Since $\Wscrtt^+_{K,p}\leq\Wscrt_{K,p}\leq\Wscr/\cos(\sqrt K D)$, the ordinary energy satisfies
\be
\Wscr(\Sigma)\geq\cos(\sqrt K D)\left(4\pi k_p(\Sigma)
-4\int_{E_p}\frac{d\Hcal^1_{g_\Sigma}}{\sn_K(r)}\right).
\ee
This gives a positive lower bound when the weighted cut length is less than $\pi k_p(\Sigma)$. If $p$ realizes $k(\Sigma)$, it also gives an embedding criterion in terms of the ordinary energy:
\be
\frac{\Wscr(\Sigma)}{4\pi\cos(\sqrt K D)}
+\frac1\pi\int_{E_p}\frac{d\Hcal^1_{g_\Sigma}}{\sn_K(r)}<2.
\ee

\begin{example}

We can carry out some of our calculations explicitly when $\Sigma$ is a distance sphere of radius $a$ in a three-sphere of constant curvature $\kappa^2$ with $0<a<\pi/(4\kappa)$. In this case, we have 
\be
\frac{\omega_K \left( 2 - \omega_K \right)}{\overline{\omega}_K \left( 2 - \overline{\omega}_K \right)} = 1
\ee
and, therefore,
\be
{1 \over 2} \Wscrtt_{K,p}(\Sigma) = {1 \over 2} \int_\Sigma
\frac{4}{(1+\cos \kappa r)^2} H^2 \, d\Acal_\Sigma.
\ee
Using again the cosine formula from~\cite[Eq.~(43)]{Meyer}, we have 
\be
\cos \kappa r = \cos^2 \kappa a + \sin^2 \kappa a \cos\theta.
\ee
From the formulas in Example~\ref{example:S3}, we then have
\be
\begin{aligned}
{1 \over 2} \Wscrtt_{K,p}(\Sigma) &= 4 \pi \cos^2 \kappa a \int_0^{\pi} \frac{\sin\theta}{(1+\cos^2 \kappa a + \sin^2 \kappa a \cos\theta)^2} \, d\theta
 = 2\pi.
\end{aligned}
\ee
In contrast, the Willmore energy of such a distance sphere takes the form
\be
\int_\Sigma H^2 \, d\Acal_\Sigma = 4 \pi \cos^2(\kappa a).
\ee
The conformally invariant energy on the same model is
\be
\int_\Sigma(H^2+\kappa^2)\,d\Acal_\Sigma=4\pi.
\ee
Thus the weighted functional has the same value on these spheres as the usual conformal energy.
\end{example}


\subsection*{A local bound under an upper curvature bound}
\label{sec:42}

We now repeat the calculation leading to the inequality~\eqref{TGTBT} but with an arbitrary upper curvature bound.  We are thus given $K \in \RR$ and a metric $g$ with sectional curvature bounded above by $K$. We choose the family of functions 
\be
\psi_K(r) \coloneqq \begin{cases} 
{1 \over \sn_K(r)} \Big( 1 - f\Big({r \over R}\Big) \Big), \quad  &0 < r < R,
\\ 
0 
&r \ge R,
\end{cases}
\ee
where $R < \dist_p( \Sigma \cap \Cut_p(M))$ is given and the function satisfies the conditions \eqref{eq:condi-f} as before.

\begin{theorem}[Weighted Willmore energy in spaces with curvature bounded above]
\label{theo:46}
Let $(M, g)$ be a complete Riemannian $3$-manifold with sectional curvature bounded above by $K>0$ and $\Sigma \hookrightarrow M$ a closed, oriented immersed surface. Fix $p$ achieving $k_p(\Sigma)=k(\Sigma)$ and require $\max_\Sigma\dist_p<\tau_K$. Then the weighted Willmore energy of $\Sigma$ 
\be
\Wscrt_{K,p} (\Sigma) = \int_\Sigma {H^2 \over \sn_K'(r)}\, d\Acal_\Sigma 
\ee
satisfies the lower bound below, where $d\coloneqq\min\{\dist_p(\Sigma\cap\Cut_p(M)),\tau_K\}$:
\bel{eq:190b}
{1 \over 4 \pi} \Wscrt_{K,p}(\Sigma) \geq k(\Sigma) 
 - \inf_{\substack{0<r<d\\r\ {\rm regular}}} {|\Sigma \cap S_{r}(p)| \over 2\pi \sn_K(r)} 
\ee
\end{theorem}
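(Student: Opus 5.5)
We follow the proof of Theorem~\ref{thm:Will1} in Section~\ref{sec:33}, replacing $1/r$ by $1/\sn_K(r)$ and the flat Hessian comparison by \eqref{eq:444}. Fix $R<d$ and a cutoff $f$ satisfying \eqref{eq:condi-f}. We insert the test function $\psi_K$ into Proposition~\ref{prop:fundamental}. Since $\psi_K\equiv0$ for $r\geq R$ and $R<\dist_p(\Sigma\cap\Cut_p(M))$, the cut term drops out and no cut-length hypothesis is needed. Because $\sn_K(r)\sim r$ at $0$, we have $\psi_0=1$. This gives
\[
2\pi k_p=\int_{\Sigma\cap B_R(p)}(\delta_p-\rho_p)\,d\Acal_\Sigma .
\]
Writing $\psi_K=(1-f)/\sn_K$, we have $\psi_K'=-(1-f)\sn_K'/\sn_K^2-f'(r/R)/(R\,\sn_K)$. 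Using $\tr_{g_\Sigma}\Hess r\geq(\sn_K'/\sn_K)(1+\la n,\del_r\ra^2)$, which follows from $\Hess\md_K\geq\sn_K' g$ and $r<\tau_K$, the $(1-f)$-part of $\rho_p$ is nonnegative, exactly as in Lemma~\ref{rem:delta}. Only $-\frac{f'}{R\,\sn_K}(1+\la n,\del_r\ra^2)$ survives, which is the analogue of \eqref{eq:9285}.

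Next we absorb the cross term $\frac{2(1-f)}{\sn_K}\la n,\del_r\ra H$ by the weighted Young inequality, as in \eqref{ndrsquare}. The choice of $\eps$ is adapted to the new coefficient of $\la n,\del_r\ra^2$, namely $\eps=\frac{f'}{R(1-f)}+\frac{2\sn_K'}{\sn_K}$, in analogy with \eqref{epchoice}. The resulting $H^2$ coefficient is
\[
\frac{(1-f)^2}{2\sn_K'(1-f)+\frac{\sn_K}{R}f'},
\]
which is at most $\frac{1}{2\sn_K'(r)}$ because $f'\geq0$ and $\sn_K>0$. Here the restriction $r<\tau_K$ is used, since $\sn_K'>0$ there. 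This yields the $K$-analogue of \eqref{TGTBT}:
\[
2\pi k_p\leq\frac12\int_{\Sigma\cap B_R}\frac{H^2}{\sn_K'}\,d\Acal_\Sigma+\int_{\Sigma\cap B_R}\frac{f'(r/R)}{R\,\sn_K(r)}\,d\Acal_\Sigma .
\]

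To obtain the sharper form with the level-set length, we keep the tangential gradient as in \eqref{cutoff-gradient-retained}. Direct substitution gives
\[
\delta_p-\rho_p=(1-f)(\delta^0_p-\rho^0_p)+\frac{f'(r/R)}{R\,\sn_K}|\nabla_\Sigma u|^2,
\]
where the superscript $0$ denotes the densities for $\psi=1/\sn_K$. Repeating the comparison and Young steps on the first term only, with $\eps=2\sn_K'/\sn_K$, gives
\[
(1-f)(\delta^0_p-\rho^0_p)\leq(1-f)H^2/(2\sn_K').
\]
Hence
\[
2\pi k_p\leq\frac12\int(1-f)\frac{H^2}{\sn_K'}+\int\frac{f'|\nabla_\Sigma u|^2}{R\,\sn_K}.
\]
Now let $f$ approximate the step function at a regular value $r_0<d$. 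By the coarea formula, the last term converges to
\[
\frac{1}{\sn_K(r_0)}\int_{u=r_0}|\nabla_\Sigma u|\,d\Lcal_\Sigma\leq\frac{|\Sigma\cap S_{r_0}(p)|}{\sn_K(r_0)}.
\]
Bounding the first term by $\frac12\Wscrt_{K,p}(\Sigma)$, which is legitimate because $\max_\Sigma\dist_p<\tau_K$ makes the weight positive globally, dividing by $2\pi$ and taking the infimum over $r_0$ gives \eqref{eq:190b}.

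\textbf{Main obstacle.} The delicate step is the step-cutoff limit. The smooth cutoffs must converge to the indicator in such a way that $\int f'(u/R)|\nabla_\Sigma u|^2/(R\,\sn_K(u))$ tends to the coarea level-set integral. This requires $r_0$ to be a regular value of $u$, so that the level-set length is continuous nearby, and it requires staying away from $Z_p$ and the cut set. We would take $f$ piecewise linear on $[r_0/R,(r_0+h)/R]$ and use continuity of $s\mapsto\int_{u=s}|\nabla_\Sigma u|$ near the regular value.
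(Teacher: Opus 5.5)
Your proposal is correct and follows the paper's proof essentially step by step. You use the same cutoff $\psi_K$, the comparison $\Hess\md_K\geq\sn_K' g$ to reduce $\rho$ to the $f'$-term, the Young step with $\eps=2\sn_K'/\sn_K+f'/(R(1-f))$, and then the sharper identity retaining $|\nabla_\Sigma u|^2$, passing to a step-cutoff/coarea limit at a regular radius. Your explicit piecewise-linear cutoff, together with continuity of $s\mapsto\int_{u=s}|\nabla_\Sigma u|$ near a regular value, simply spells out what the paper leaves as ``smooth cutoffs approaching a step.''
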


\begin{proof} In this proof, $\delta_{K,p}$ and $\rho_{K,p}$ denote the densities associated with the field $\psi_K(r)\del_r$. We have
$
2 \pi k_p = \int_{\Sigma \cap B_R(p)} \big( \delta_{K,p} - \rho_{K,p} \big) \, d\Acal_\Sigma. 
$
On one hand, we have 
\be
\aligned
\rho_{K,p}
&= {1 \over \sn_K(r)} \Big( 1 - f\Big({r \over R}\Big) \Big)
\Big( \Delta r - \left( \Hess r \right)(n, n) \Big) 
\\
& \quad 
- \Bigg( \Big(1-f\Big({r\over R}\Big)\Big)\frac{\sn_K\prime(r)}{\sn_K(r)^2} 
+ {1 \over R \, \sn_K(r)} f'\Big({r \over R} \Big)
\Bigg)  
\left( 1 + \la n, \del_r \ra^2 \right)
\endaligned
\ee
and, thanks to \eqref{eq:444}, 
\be
\aligned
\rho_{K,p}
&\geq
{1 \over \sn_K(r)} \Big( 1 - f\Big({r \over R}\Big) \Big)
 \frac{\sn_K^{\prime}(r)}{\sn_K(r)} \left( 1 + \la n, \del_r \ra^2 \right)
\\
& \quad 
- \Bigg( \Big(1-f\Big({r\over R}\Big)\Big)\frac{\sn_K\prime(r)}{\sn_K(r)^2} 
+ {1 \over R \, \sn_K(r)} f'\Big({r \over R} \Big)
\Bigg)  
\left( 1 + \la n, \del_r \ra^2 \right), 
\endaligned
\ee
which provides a lower bound for the curvature contribution: 
\be
\rho_{K,p}
\geq 
 - {1\over R \, \sn_K(r)} f'\Big({r \over R}\Big) 
\, 
\left( 1 + \la n, \del_r \ra^2 \right). 
\ee
Observe that this is consistent with our earlier estimate in Lemma~\ref{rem:delta} (in which $f$ was taken to vanish identically) as well as the equation \eqref{eq:9285} (in which $K$ was assumed to vanish). 

On the other hand, we obtain 
\be
\aligned
{1 \over 2} 
\delta_{K,p} = \, 
&
{1 \over \sn_K(r)} \left( 1 - f(r/R) \right) \la \del_r, n \ra H
\\
& - \left( (1-f(r/R)) {\sn_K'(r) \over \sn_K(r)^2}  + {f'(r/R) \over R \sn_K(r)} \right) \la n, \del_r \ra^2
\endaligned
\ee 
We now recall the integrand arising in our weighted Willmore energy $\Wscrt(\Sigma)$ 
and we claim that 
\bel{eq:keyinequa}
\delta_{K,p} + {1 \over R \, \sn_K(r)} f'\Big({r \over R}\Big) \, \la n, \del_r \ra^2 
\leq 
{1 \over 2}  
{H^2 \over \sn_K^{\prime}(r)}
\ee
The auxiliary ratio, defined only where $H\ne0$, is $\omega_{K,p} = \frac{2\sn_K^{\prime}(r)}{\sn_K(r)} \, \frac{\la n, \del_r \ra}{H}$. 
Before we proceed further, let us stress that this inequality reduces to the one derived in Lemma~\ref{lemma43} if $f \equiv 0$, while it reduces to the one in Lemma~\ref{lem:120} when $K$ is taken to be zero. 

We find 
\be
\aligned
E \coloneqq\, &
{1 \over 2} {1 \over \sn_K^{\prime}(r)} H^2
- \delta_{K,p} - {1 \over R \, \sn_K(r)} f'\Big({r \over R}\Big) \, \la n, \del_r \ra^2 
\\
= \, &
{1 \over 2} {1 \over \sn_K^{\prime}(r)} H^2
-{2 \over \sn_K(r)} \left( 1 - f(r/R) \right) \la \del_r, n \ra H 
\\
& + \left( 2 (1-f(r/R)) {\sn_K'(r) \over \sn_K(r)^2}  + {f'(r/R) \over R \sn_K(r)} \right) \la n, \del_r \ra^2, 
\endaligned
\ee
so that 
\be
\aligned
E 
\geq \, & \Bigg( 
{1 \over 2} {1\over \sn_K^{\prime}(r)} 
- {1 \over \eps} {(1 - f(r/R)) \over \sn_K(r)} \Bigg) H^2 
\\
& + \Bigg( 
- \eps {(1 - f(r/R)) \over \sn_K(r)}  + 2 (1-f(r/R)) {\sn_K'(r) \over \sn_K(r)^2} 
  + {f'(r/R) \over R \sn_K(r)} 
\Bigg) 
\la n, \del_r \ra^2.
\endaligned
\ee
On the set $f(r/R)=1$, we put $\eps(r)=2\sn_K\prime(r)/\sn_K(r)$ and set all zero-over-zero quotients below to zero. Elsewhere, we choose $\eps=\eps(r)$ so that the coefficient of $\la n, \del_r \ra^2$ vanishes, that is,  
\be
\eps(r) \coloneqq  2  {\sn_K'(r) \over \sn_K(r)}  + {f'(r/R) \over R (1-f(r/R))}.  
\ee
With this choice, we thus obtain 
\be
\aligned
E 
\geq \, & \Bigg( 
{1 \over 2} {1 \over \sn_K^{\prime}(r)} 
- {1 \over \eps(r)} {(1 - f(r/R)) \over \sn_K(r)} \Bigg) H^2,  
\endaligned
\ee 
which, since $f' \geq 0$ and $0 \leq f \leq 1$, leads us to 
\bel{TGTBT-g}
2 \pi k_p \leq \int_{\Sigma \cap B_R(p)} {1\over R \, \sn_K(r)} f'\left( r/R \right) \, d\Acal_\Sigma
+ {1 \over 2} \int_{\Sigma\cap B_R(p)}\frac{H^2}{\sn_K\prime(r)}\,d\Acal_\Sigma.
\ee
For the boundary estimate, we use the sharper identity. We write $\delta_{K,p}^f,\rho_{K,p}^f$ for the densities with the cutoff and $\delta_{K,p}^0,\rho_{K,p}^0$ for those with $f=0$. Direct substitution gives
\be
\delta_{K,p}^f-\rho_{K,p}^f=(1-f)(\delta_{K,p}^0-\rho_{K,p}^0)
+\frac{f'(r/R)}{R\sn_K(r)}|\nabla_\Sigma u|^2.
\ee
The square in Lemma~\ref{lemma43} bounds the first difference by $(1-f)H^2/(2\sn_K\prime(r))$. Smooth cutoffs approaching a step at a regular radius $t$ then give, by coarea,
\be
2\pi k_p\leq\frac12\int_{\Sigma\cap B_t(p)}\frac{H^2}{\sn_K\prime(r)}\,d\Acal_\Sigma
+\frac1{\sn_K(t)}\int_{u=t}|\nabla_\Sigma u|\,d\Lcal_\Sigma.
\ee
We use $|\nabla_\Sigma u|\leq1$ and optimize over regular $t<d$. The same local inequality remains valid without a global radius restriction if $t<\min\{\dist_p(\Sigma\cap\Cut_p(M)),\tau_K\}$; only the passage to the global weighted energy requires positivity on all of $\Sigma$. 
\end{proof}



\paragraph{Acknowledgments}
The author thanks James Grant, with whom this project was initiated in 2015. The author was partially supported by the ANR project ANR-23-CE40-0010-02 and the European Union's MSCA project EinsteinWaves (grant agreement No.~101131233).  Note on AI use: 
ChatGPT~6 was used to assist with language editing, bibliographic verification, and figure preparation.

\appendix
\section{Traces on the immersed surface}
We use the notation $u=\dist_p\circ\varphi$, $Z_p=\varphi^{-1}(p)$, and $E_p=\varphi^{-1}(\Cut_p(M))$ introduced in Section~\ref{sec:2}. The following lemma gives the traces and sign needed in the main cut identities.

\begin{lemma}[The cut measure on the parameter surface]
\label{cut-measure-lemma}
Under the equation~\eqref{cutSigma}, the distributional Laplacian of $u$ on $\Sigma\setminus Z_p$ has the form
\bel{surface-cut-measure}
\Delta_\Sigma u=(\tr_{g_\Sigma}\Hess r-2H\langle n,\del_r\rangle)\,d\Acal_\Sigma-\nu_p,
\qquad \nu_p=\beta_p\,\Hcal^1_{g_\Sigma}\!\restriction E_p,\quad 0\leq\beta_p\leq2.
\ee
The density $\beta_p$ is supported on the countably rectifiable jump set $J_p\subset E_p$ of $\nabla_\Sigma u$. At almost every point of $J_p$, let $N$ be the unit conormal in $T\Sigma$ pointing from the minus side to the plus side. The two tangential traces satisfy
\bel{surface-cut-traces}
\nabla_\Sigma u^+-\nabla_\Sigma u^-=-\beta_p N.
\ee
At an ordinary transverse crossing of a smooth ambient cut hypersurface, this is the tangential projection of the two classical radial limits. At other points the formula defines the traces intrinsically on $\Sigma$.
\end{lemma}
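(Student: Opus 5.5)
The plan is to deduce everything from local semiconcavity of $u=\dist_p\circ\varphi$ on $\Sigma\setminus Z_p$, combined with the structure theory of $BV$ gradients. First I will recall that on $M\setminus\{p\}$ the distance function $r=\dist_p$ is locally semiconcave. Near any point $q\neq p$, including cut points, $r$ is the minimum of smooth support functions obtained from minimizing geodesics. Concretely, $\Hess r\le C_{q}\,g$ in the barrier (support) sense on a neighbourhood of $q$, with $C_q$ depending only on a lower curvature bound on a compact set and on $\dist(q,p)$. Composing with the smooth immersion $\varphi$ preserves this. On each compact subset of $\Sigma\setminus Z_p$ the function $u-\tfrac{C}{2}\,|x|^2$ is concave in local coordinates, so $u$ is locally Lipschitz and $\nabla_\Sigma u\in BV_{\rm loc}$. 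Moreover, $\Delta_\Sigma u$ is a Radon measure whose singular part is nonpositive.

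Next I will identify the absolutely continuous part. On $\Sigma\setminus(E_p\cup Z_p)$ the function $r$ is smooth, and $\nabla_\Sigma u=\del_r^\parallel$. The computation in the proof of Lemma~\ref{IntegrationLemma}, applied to $X=\del_r$ and written for the tangential part, gives
\be
\dive_\Sigma \del_r^\parallel=\tr_{g_\Sigma}\Hess r-2H\langle n,\del_r\rangle
\ee
pointwise there. By \eqref{cutSigma}, $E_p$ has zero area, so this is the density of the absolutely continuous part on all of $\Sigma\setminus Z_p$. The singular part of $D(\nabla_\Sigma u)$ is therefore supported in $E_p$.

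The main step is to show that this singular part is purely a jump part. I will use the standard fact that the Cantor part of the derivative of a $BV$ field vanishes on every Borel set that is $\sigma$-finite for $\Hcal^1$. By \eqref{cutSigma} this applies to $E_p$, so the singular part equals the jump part
\be
(\nabla_\Sigma u^+-\nabla_\Sigma u^-)\cdot N\,\Hcal^1\restriction J_p,
\ee
where $J_p\subset E_p$ is countably rectifiable. I expect this to be the main obstacle: one must make sure the semiconcavity constants are locally uniform up to the cut locus (they are, away from $p$) and apply the Federer–Vol'pert/Ambrosio–Fusco–Pallara structure theorem correctly on a Riemannian surface via charts. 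Since $u$ itself is continuous, its tangential derivatives along the rectifiable set $J_p$ agree from both sides. Hence the jump of $\nabla_\Sigma u$ is normal to $J_p$, namely $\nabla_\Sigma u^+-\nabla_\Sigma u^-=-\beta_pN$ for a scalar $\beta_p$, and the singular part equals $-\beta_p\,\Hcal^1\restriction J_p$.

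It remains to fix the sign and size of $\beta_p$. Nonpositivity of the singular part of $\Delta_\Sigma u$ for a semiconcave function gives $\beta_p\ge0$. Since $|\nabla_\Sigma u^\pm|\le|\del_r|=1$ almost everywhere, the jump has norm at most $2$, so $\beta_p\le2$. Extending $\beta_p$ by zero on $E_p\setminus J_p$ yields \eqref{surface-cut-measure} and \eqref{surface-cut-traces}. Finally, at a transverse crossing of a smooth cleave hypersurface, the one-sided approximate limits of $\nabla_\Sigma u$ are the tangential projections of the two classical radial limits $\del_r^\pm$, because $r$ is smooth up to the hypersurface from each side. This gives the stated compatibility with the classical traces.
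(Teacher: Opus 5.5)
Your proposal is correct and follows essentially the same route as the paper. The shared steps are: local semiconcavity of $u$ obtained by composing the semiconcave distance with the immersion, the first-variation computation of the absolutely continuous part off the closed, area-null set $E_p$, removal of the Cantor part because $E_p$ has finite length, and the bounds $0\le\beta_p\le2$ from semiconcavity and the $1$-Lipschitz property.

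The only small difference is how you show the jump is normal to $J_p$. You use continuity of $u$ across $J_p$, which a blow-up at jump points makes rigorous, whereas the paper invokes symmetry of the distributional Hessian. The paper's extra remark that $\Delta_\Sigma u$ has no atom at $Z_p$ lies outside the stated lemma, which only concerns $\Sigma\setminus Z_p$.
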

\begin{proof}
The ambient distance is locally Lipschitz and locally semiconcave away from $p$~\cite[Proposition~2.1 and Section~2]{MMU}. Smooth composition preserves local semiconcavity: the second-order error of the immersion is quadratic and the distance is Lipschitz. Thus $u$ is locally semiconcave on $\Sigma\setminus Z_p$, its gradient has locally bounded variation, and its singular Hessian is negative semidefinite. We use the absolutely continuous, jump, and Cantor decomposition and the trace formulas in~\cite[Chapter~3]{AFP}.

Off $E_p\cup Z_p$, the chain rule gives $\Delta_\Sigma u=\tr_{g_\Sigma}\Hess r-2H\langle n,\del_r\rangle$, because the traced normal second derivative of the immersion is $-2Hn$. All singular derivatives away from $Z_p$ are supported on $E_p$. The Cantor part of a derivative of a function of bounded variation vanishes on every set of finite one-dimensional Hausdorff measure in a two-dimensional chart~\cite[Chapter~3]{AFP}. The equation~\eqref{cutSigma} therefore eliminates this part on the surface.

At a jump point, the singular Hessian is $(\nabla_\Sigma u^+-\nabla_\Sigma u^-)\otimes N$ times length measure. Symmetry makes the jump parallel to $N$; semiconcavity gives its negative sign. Both gradient traces have norm at most $1$, since $u$ is $1$-Lipschitz. Hence the equation~\eqref{surface-cut-traces} holds with $0\leq\beta_p\leq2$. We extend $\beta_p$ by zero on $E_p\setminus J_p$. Near a point of $Z_p$, normal coordinates give $u=|x|+\Ocal(|x|^2)$ and $D(\nabla_\Sigma u)=\Ocal(1/|x|)$ off the origin, an integrable bound in dimension two. The flux of $\nabla_\Sigma u$ through a small circle is $\Ocal(\sigma)$, so its Laplacian has no atom there.
\end{proof}


\small

\begin{thebibliography}{Top00}
\bibitem{AFP}
\textsc{L.~Ambrosio, N.~Fusco, and D.~Pallara}, \emph{Functions of bounded variation and free discontinuity problems}, Oxford Mathematical Monographs, Clarendon Press, Oxford, 2000.


\bibitem{Hebda}
\textsc{J.J.~Hebda},
\emph{Metric structure of cut loci in surfaces and Ambrose's problem},
J. Differential Geom.~40 (1994), 621--642.

\bibitem{Kleiner}
\textsc{B.~Kleiner}, \emph{An isoperimetric comparison theorem}, Invent. Math. 108 (1992), 37--47.

\bibitem{Kusner}
\textsc{R.~Kusner}, \emph{Comparison surfaces for the Willmore problem}, Pacific J. Math. 138 (1989), 317--345.

\bibitem{LM}
\textsc{P. Laurain and A. Mondino,} 
\emph{Concentration of small Willmore spheres in Riemannian 3-manifolds}, 
Anal. PDE 7 (2014), 1901--1921.

\bibitem{LiYau:Willmore}
\textsc{P.~Li and S.-T.~Yau},
\emph{A new conformal invariant and its applications to the Willmore conjecture
and the first eigenvalue of compact surfaces}, Invent. Math. 69 (1982), 269--291.


\bibitem{MMU}
\textsc{C.~Mantegazza, G.~Mascellani, and G.~Uraltsev}, \emph{On the distributional Hessian of the distance function}, Pacific J. Math. 270 (2014), 151--166.

\bibitem{Meyer}
\textsc{W.T.~Meyer},
\emph{Toponogov's theorem and applications}, unpublished lecture notes, College on Differential Geometry, Trieste, 1989.

\bibitem{Ozols}
\textsc{V.~Ozols},
\emph{Cut loci in Riemannian manifolds},
T\^ohoku Math. J. 26 (1974), 219--227.

\bibitem{Petersen}
\textsc{P.~Petersen},
\emph{Riemannian geometry},
Graduate Texts in Mathematics, Vol.~171, second edition, Springer, New York, 2006.

\bibitem{Ritore}
\textsc{M.~Ritor\'e}, \emph{Optimal isoperimetric inequalities for three-dimensional Cartan--Hadamard manifolds}, Global theory of minimal surfaces, Clay Math. Proc., Vol.~2, Amer. Math. Soc., Providence, RI, 2005, pp.~395--404.

\bibitem{Scharrer}
\textsc{C.~Scharrer}, \emph{Some geometric inequalities for varifolds on Riemannian manifolds based on monotonicity identities}, Ann. Global Anal. Geom. 61 (2022), 691--719.

\bibitem{Schulze}
\textsc{F.~Schulze}, \emph{Nonlinear evolution by mean curvature and isoperimetric inequalities}, J. Differential Geom. 79 (2008), 197--241.

\bibitem{Simon}
\textsc{L.~Simon},
\emph{Existence of surfaces minimizing the Willmore functional},
Comm. Anal. Geom. 1 (1993), 281--326.

\bibitem{Topping:MCF}
\textsc{P.~Topping},
\emph{Mean curvature flow and geometric inequalities},
J. Reine Angew. Math. 503 (1998), 47--61.

\bibitem{Topping:W}
\textsc{P.~Topping},
\emph{Towards the Willmore conjecture,}
Calc. Var. 11 (2000), 361--393.
 

\end{thebibliography}


\normalsize
\end{document}